\DeclareSymbolFontAlphabet{\mathbb}{AMSb}
\DeclareSymbolFontAlphabet{\mathbbl}{bbold}
\definecolor{darkgreen}{rgb}{0,0.4,0}
\definecolor{BrickRed}{rgb}{0.65,0.08,0}
\def\abx@missing@entry#1{\abx@missing{#1??}}
\newcolumntype{C}{>{$}c<{$}} 
\newcolumntype{L}{>{$}l<{$}} 
\newtheorem{theorem}{Theorem}
\newtheorem*{theorem*}{Theorem}
\newtheorem{lemma}[theorem]{Lemma}
\newtheorem{proposition}[theorem]{Proposition}
\newtheorem{condition}[theorem]{Condition}
\newtheorem*{main-thm}{Main Theorem}
\crefname{assumption}{Assumption}{Assumptions}
\crefname{condition}{Condition}{Conditions}
\theoremstyle{definition}
\newtheorem{remark}[theorem]{Remark}
\newtheorem{conjecture}[theorem]{Conjecture}
\newcommand{\eps}{\varepsilon}
\newcommand{\ZZ}{\mathbb{Z}}
\newcommand{\EE}{\mathbb{E}}
\newcommand{\Landauo}{o}
\newcommand{\OEIS}[1]{\href{http://oeis.org/#1}{OEIS~#1}}
\renewcommand{\P}{\mathbb{P}}
\newcommand{\E}{E}
\newcommand{\nto}{\ensuremath{n\rightarrow\infty}}
\newcommand{\largev}{\ensuremath{n-2g =\omega(\log n)}}
\newcommand{\smallv}{\ensuremath{n-2g =o(\log n)}}
\newcommand{\Sij}{\sum_{i,j\in\mathcal S}}
\newcommand{\f}[2]{f\left(\frac{#1}{#2}\right)}
\newcommand{\bigO}[1]{O\left({#1}\right)}
\newcommand{\bigTheta}[1]{\Theta\left({#1}\right)}
\newcommand{\smallo}[1]{o\left({#1}\right)}
\newcommand{\halfminth}{\left(\frac 1 2 -\theta\right)}
\newcommand{\err}{\bigO{n^{-1}\log^{-2}(n)}}
\newcommand{\bigpar}[1]{\left({#1}\right)}
\newcommand{\goodB}{\mathcal{B}^{\text{good}}}
\newcommand{\goodBB}{\overline{\goodB}}
\newcommand{\badB}{\mathcal{B}^{\text{bad}}}
\newcommand{\badBB}{\overline{\badB}}
\newcommand{\good}[1]{\mathcal{G}^{\text{good}}({#1})}
\newcommand{\goodo}[1]{\overline{\mathcal{G}^{\text{good}}}({#1})}
\newcommand{\goodG}{\good{n}}
\newcommand{\goodGG}{\overline{\goodG}}
\newcommand{\goodGNT}{\good{N_\tau}}
\newcommand{\goodGNK}{\good{N_k}}
\newcommand{\bad}[1]{\mathcal{G}^{\text{bad}}({#1})}
\newcommand{\bado}[1]{\overline{\mathcal{G}^{\text{bad}}}({#1})}
\newcommand{\badG}{\bad{n}}
\newcommand{\badGG}{\overline{\badG}}
\newcommand{\badGNK}{\bad{N_k}}
\newcommand{\interv}{\mathcal I(n)}
\newcommand{\gstrict}{\ensuremath{g\in \interv}}
\newcommand{\mstrict}{\ensuremath{(n,g)\in \mathcal I(m)}}
\newcommand{\badV}{\log^{4/3}(n)}
\def \l{\lambda}
\def\ll{\log(\lambda)}
\def\lll{\log(-\ll)}
\providecommand{\keywords}[1]
{
  \small	
  \noindent
  \textbf{{Keywords:}} #1
}
\begin{document}

\title{\textbf{Bivariate asymptotics via random walks: \\application to large genus maps}}

\author{%
  Andrew Elvey Price\thanks{CNRS and Institut Denis Poisson, Université de Tours, France}
  \and
  Wenjie Fang\thanks{Univ Gustave Eiffel, CNRS, LIGM, F-77454 Marne-la-Vallée, France}
  \and
  Baptiste Louf\thanks{CNRS and Institut de Mathématiques de Bordeaux, France}
  \and
  Michael Wallner\thanks{Institute of Discrete Mathematics, TU Graz, Austria and Institute of Discrete Mathematics and Geometry, TU Wien, Austria} 
}

\maketitle

\begin{abstract}
  We obtain bivariate asymptotics for the number of (unicellular) combinatorial maps (a model of discrete surfaces) as both the size and the genus grow. This work is related to two research topics that have been very active recently: multivariate asymptotics and large genus geometry.  
  Our method consists of studying a linear recurrence for these numbers, and can be applied to many other linear recurrences. In particular, we include a general theorem that yields asymptotics for such recurrences, provided that some assumptions are satisfied.
  \thispagestyle{empty}
\end{abstract}

\keywords{Maps, multivariate recurrences, random walks, asymptotic enumeration, large genus}


\section{Introduction} \label{sec:intro}

This paper deals with \emph{multivariate asymptotics} and \emph{large genus geometry}, two recent topics that have gained a lot of traction in the past 15 years \cite{Agg,linear-genus,budzinski-louf,DGZZ,acsv_web}.

\emph{Asymptotic enumeration} is one of the main branches of combinatorics, and it has many applications; see, e.g., \cite{flajolet}. The univariate case (when one parameter tends to infinity) has been studied extensively and many general methods have been developed to tackle it, but the \emph{multivariate} case is notoriously much harder. A recent systematic approach to this topic is analytic combinatorics in several variables (ACSV) \cite{acsv,acsv-mishna}. However, the ACSV theory mostly only applies to rational and algebraic generating functions, as it requires a good knowledge on multivariate singularities of the generating function. Another approach is to adapt univariate tools such as the saddle-point method \cite{flajolet} to the multivariate case by interpreting the numbers as coefficients of a sequence of univariate generating functions. This approach needs a refined analysis of the singularities of each univariate generating function in the sequence. Instead of looking at generating functions, there are also works on extracting asymptotic expressions from recurrence relations, see for instance \cite[Chapter~9]{rec-survey} and references therein. 

Recently, a subset of the current authors worked on the asymptotic enumeration of a combinatorial family called compacted trees \cite{compacted-tree}, which led to a new method for asymptotic enumeration through the analysis of \emph{bivariate recurrences} having a certain behaviour. This method was later applied to other objects such as minimal automata~\cite{minimal-automata}, Young tableaux~\cite{banderier2021walls} and phylogenetic networks~\cite{fuchs2021phylo}. Although this new method so far yields only univariate asymptotics, it gave us hope that some of its ideas may be adaptable to other combinatorial families governed by bivariate recurrences, including the extraction of bivariate asymptotics. 

In this article, we introduce a new method for determining bivariate asymptotics, given a (linear) bivariate recurrence. As in \cite{compacted-tree}, the first ingredient in our method is an asymptotic guess and check approach, in which the bivariate recurrence transforms into a differential equation for a function involved in the asymptotic form. The second ingredient, which is new to this work, involves modelling the recurrence by a random walk. In particular, we use our new method to determine bivariate asymptotics for \emph{unicellular maps}, a type of combinatorial map.

\emph{Combinatorial maps}, or simply \emph{maps}, can be seen as a model of discrete surfaces or, alternatively, graphs on surfaces. It thus comes with two natural parameters: the size (usually the number of edges), and the genus of the surface. For a long time, the enumeration of combinatorial maps was confined to constant genus, for which general asymptotic enumeration, sophisticated exact expressions and even illuminating bijections exist, especially for planar maps (see \cite{planar-map-survey} for a survey). The study of asymptotic enumeration when the genus varies with the size has only been approached much more recently. This is partly due to the growing interest in understanding the geometry of {large genus }maps (see, e.g., \cite{budzinski-louf,DGZZ}), for which asymptotic enumeration proves to be a crucial tool. For instance, the analysis of large genus geometry of certain models in \cite{DGZZ} crucially relies on the asymptotics of related objects proven in \cite{Agg}. However, we lack general tools to deal with the question of bivariate asymptotic enumeration of maps. Existing works (see, e.g., \cite{budzinski-louf,meso-genus,linear-genus}) derive such asymptotics as corollaries of probabilistic results, which is not the direction of dependency that we want.

Combinatorial maps can be considered as models of certain integrable systems studied in theoretical physics (see, e.g., \cite{LandoZvonkine}). This perspective has led to surprising enumerative relations that are difficult to interpret combinatorially, including the Harer--Zagier recurrence for unicellular maps~\cite{HarerZagier1986Euler}, and the Goulden--Jackson recurrence for triangulations \cite{KP-triangulation}. The Harer--Zagier recurrence is precisely what we use in this article to determine bivariate asymptotics for unicellular maps.

As there is no well-established method for extracting \emph{bivariate} asymptotic behaviour from linear bivariate recurrences, a new method is needed, and by working on the aforementioned Harer--Zagier recurrence, we devise such a new method based on the asymptotic guess-and-check approach and random walks, with which we obtain the bivariate asymptotic enumeration of unicellular maps uniformly for all regimes; see \Cref{sec:main_results} for details. Although these results are already known in several regimes \cite{meso-genus,linear-genus}, our approach is \emph{structure-agnostic}, meaning that it does not use any prior knowledge on the combinatorial model, but relies solely on the bivariate recurrence. As a consequence, our new method has the potential to be applied to any other class of combinatorial objects governed by a similar recurrence.

In this paper, we compute the asymptotic number of unicellular maps in terms of two parameters (size and genus) by analysing a linear bivariate recurrence satisfied by these numbers. Our method relies on two main ingredients: an \emph{asymptotic guess and check} approach, and modelling the recurrence by a \emph{random walk}. An informal overview of how these two ingredients are combined in our method is given in \Cref{sec:guess}. The first step of the method: {\em guessing} an asymptotic enumeration of unicellular maps is given in \Cref{sec:guessing}. In \Cref{sec:RW} we give a detailed explanation of how to use random walks to validate our guessed asymptotic expression, which depends on several assumptions on random walks we construct from the bivariate recurrence. In \Cref{sec:largev} and \Cref{sec:smallv}, we check that the assumptions laid out in \Cref{sec:RW} are valid in both the lower regime $n - 2g = \omega(\log n)$ and the higher one $n - 2g = o(\log n)$, thus concluding the validity of our guessed asymptotic expression. For the intermediate regime $n - 2g = \Theta(\log n)$, our method does not apply, and we resort to the saddle-point method to check our result in this regime, thus establishing its validity for the whole parametric space. After showcasing our method on asymptotic enumeration of unicellular maps, we propose in \Cref{sec:RW_general} a general form of our asymptotic guess and check method that applies to linear bivariate recurrences satisfying several assumptions, which are more general than those laid out in \Cref{sec:RW}. Finally, in \Cref{sec:discussion}, we conclude with some discussion of our method, notably how it differs from existing works, its generality, and ongoing future work beyond linear recurrences.

\section{Main result on large genus unicellular maps}\label{sec:main_results}

A \emph{unicellular map} with $n$ edges and genus~$g$ is the combinatorial data of a $2n$-gon whose sides are identified two by two to form a compact, connected, oriented surface of genus~$g$, along with an additional distinguished oriented edge called the root; see \Cref{fig:map}.  Note that unicellular maps of genus $0$ are rooted plane trees.

\begin{figure}
  \centering
  \includegraphics[width=0.6\linewidth]{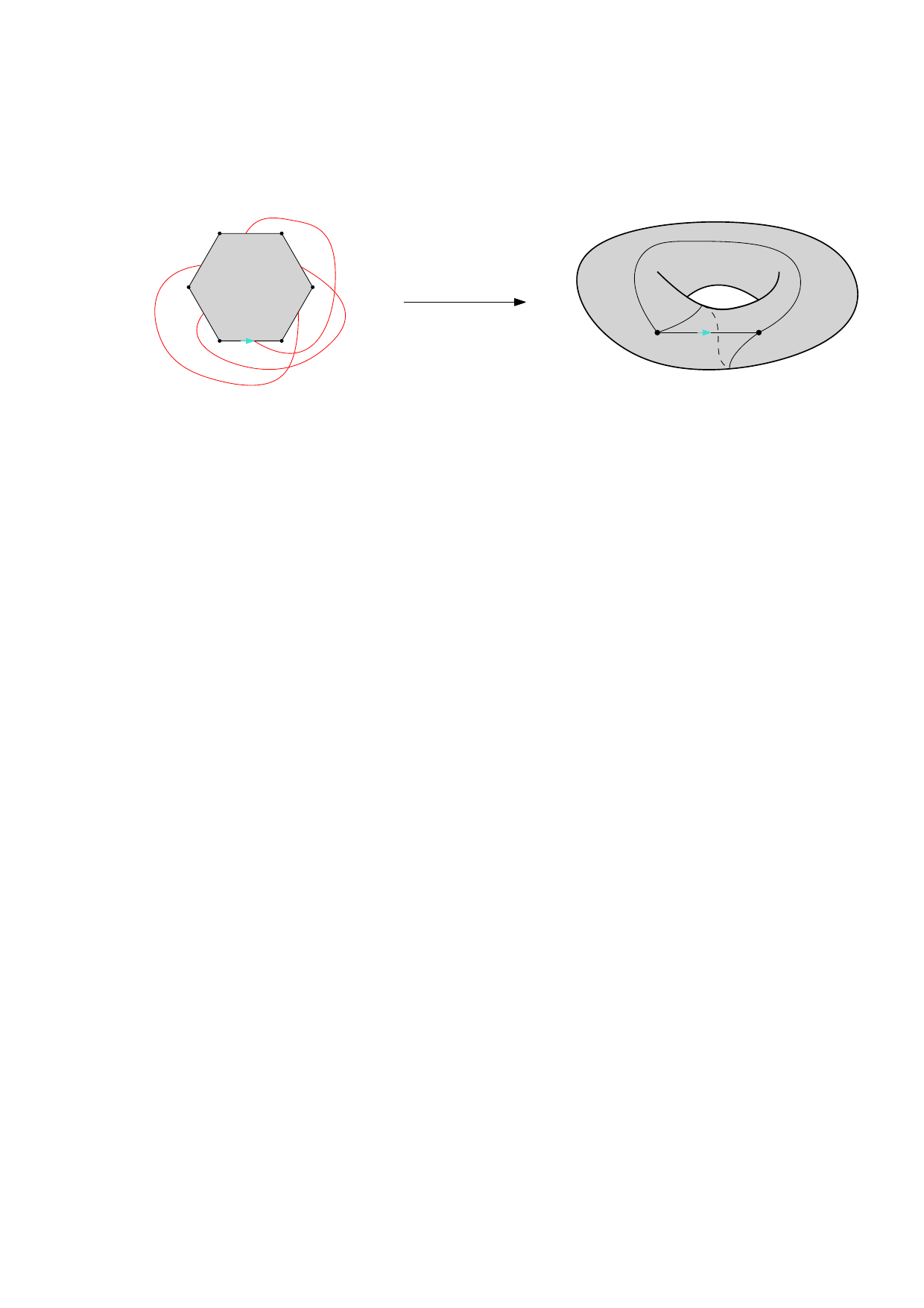}
  \caption{A unicellular map of genus $1$ with $3$ edges.}
  \label{fig:map}
\end{figure}

Let $\E(n,g)$ be the number of unicellular maps with $n$ edges and genus $g$. They  satisfy the \emph{Harer--Zagier recurrence formula}; see~\cite[p.~460]{HarerZagier1986Euler}:
\begin{align}
  \label{eq:recurrence}
  (n+1)\E(n,g)=2(2n-1)\E(n-1,g)+(n-1)(2n-1)(2n-3)\E(n-2,g-1),
\end{align}
with boundary conditions 
\begin{equation}\label{eq_bdy}
  \E(0,0)=1 \qquad \text{ and } \qquad  \E(n,g)=0\text{ if } g<0 \text{ or } n<2g.
\end{equation}

\begin{theorem}[Asymptotic enumeration of large genus unicellular maps]\label{thm_main}
  Given a sequence\footnote{We will use the notation $g \equiv g_n$ to denote the omission of the dependency of $g$ on $n$. 
  } $g \equiv g_n$ such that $\frac{n-2g}{\log n}\rightarrow \infty$ as $n\to\infty$, the following asymptotics hold:
  \begin{equation}\label{eq_main_asympto}
    \E(n,g) \sim \frac{1}{2\sqrt{\pi}} \frac{\sqrt{g}(g/e)^g}{g!} n^{2g-2} e^{nf(\frac{g}{n})} {J}\left(\frac{g}{n}\right),
  \end{equation}
  with $f$ and $J$ defined as follows (see \Cref{fig:lambda-f-J}): 
  For every $\theta\in[0,1/2]$, let 
  $\lambda \equiv \lambda(\theta) \in [0,1/4]$ be the unique value satisfying 
  \begin{align}
    \label{eq:thetasolution}
    \theta
    = \frac{1}{2} - \frac{\lambda\log\left(\frac{1 + \sqrt{1 - 4\lambda}}{1 - \sqrt{1 - 4\lambda}}\right)}{\sqrt{1 - 4\lambda}}
    = \frac{1}{2} - 2\lambda \sum_{n \geq 0} \frac{(1-4\lambda)^n}{2n+1}
    .
  \end{align}
 
  Then we define
 
  \begin{align}
    f(\theta)& =  - \theta \log(1-4\lambda) - (1-2\theta) \log(\lambda) + 2(\log(2)-1)\theta,\label{eq:f_solution}\\
    {J}(\theta)&=\sqrt{\frac{2}{\lambda(1-2\theta-4\lambda+4\theta\lambda)}}.\label{eq:J_solution}
  \end{align}
\end{theorem}

\begin{figure}[t]
  \centering
  \newcommand{\myplotscale}{0.3\linewidth}
  \includegraphics[width=\myplotscale]{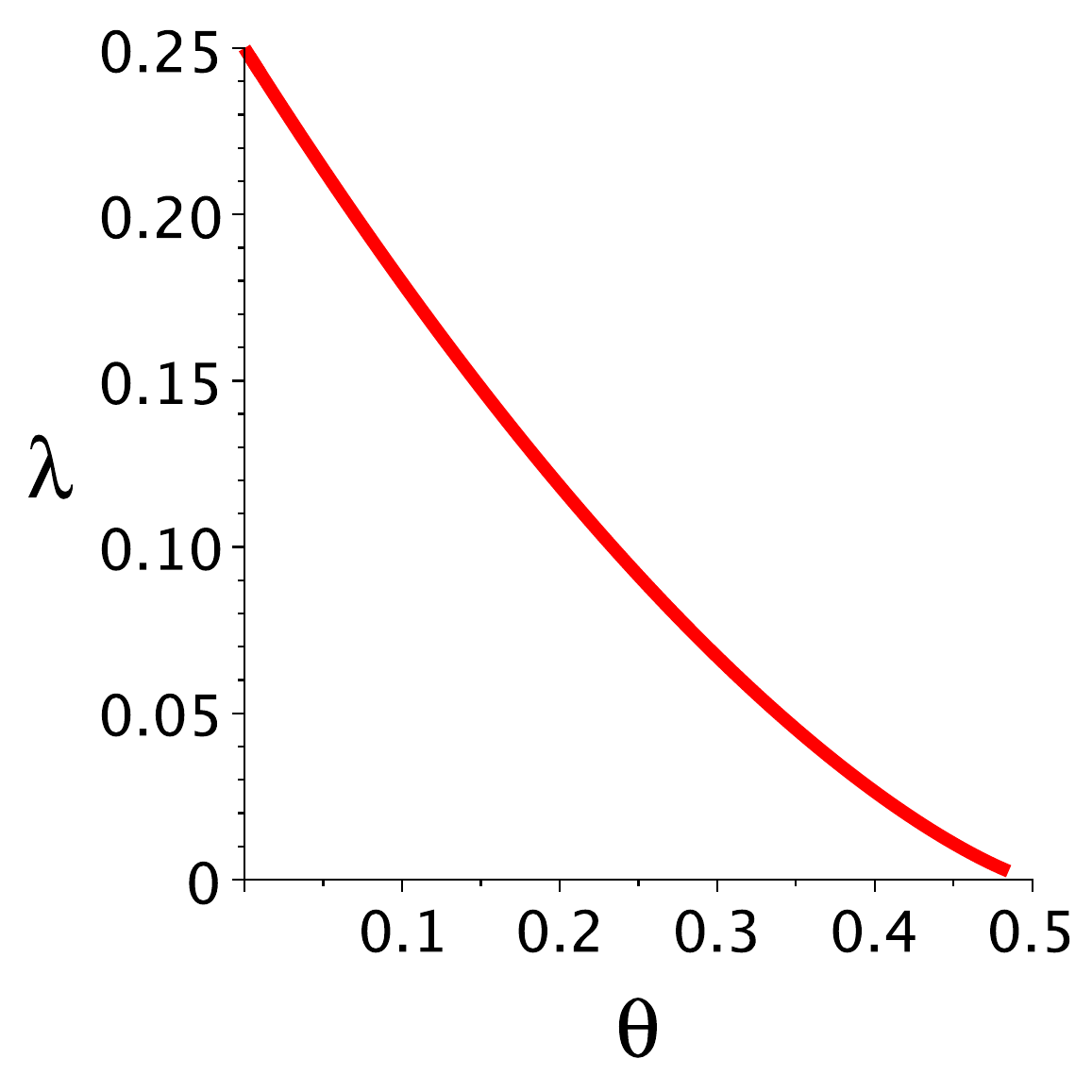}
  \quad
  \includegraphics[width=\myplotscale]{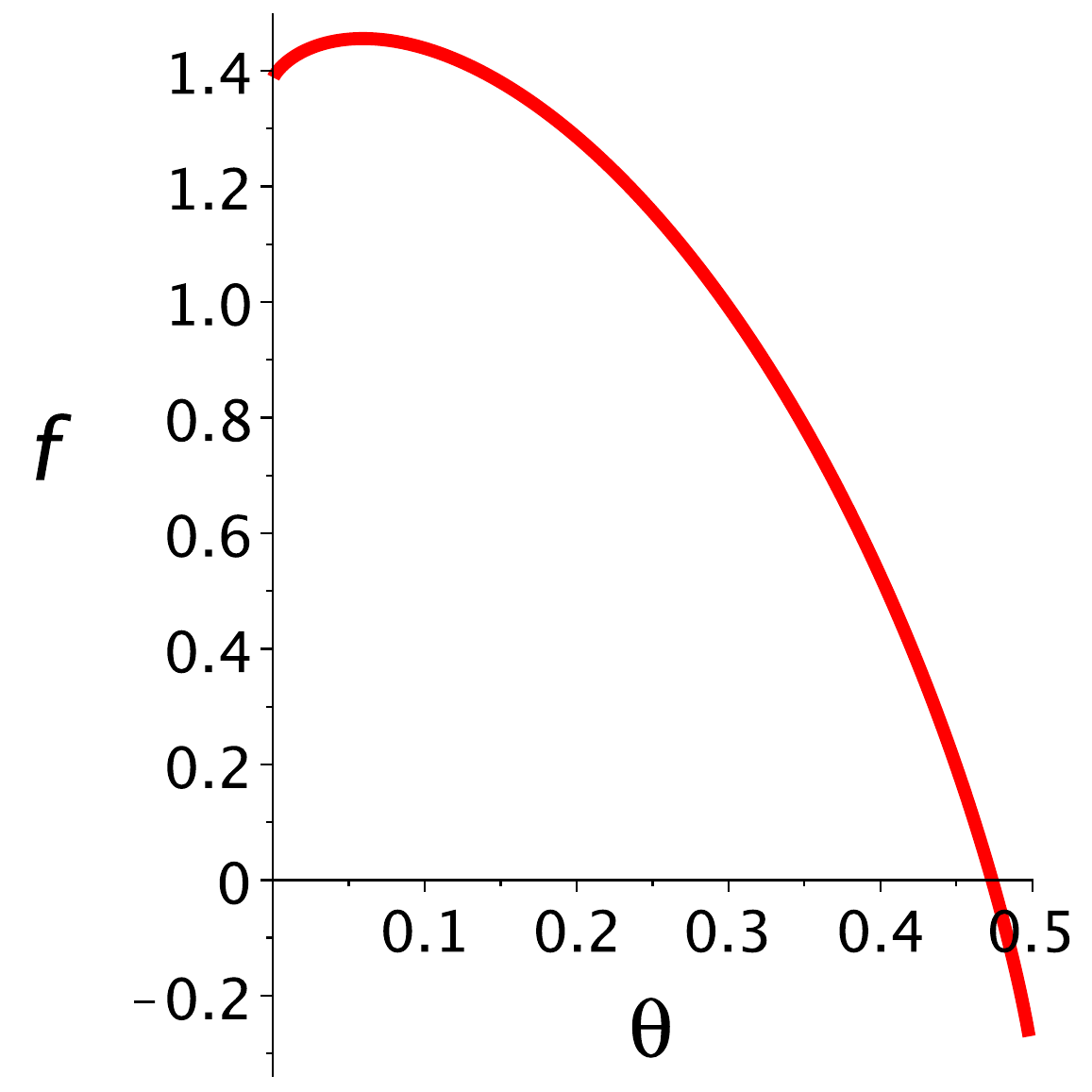}
  \quad
  \includegraphics[width=\myplotscale]{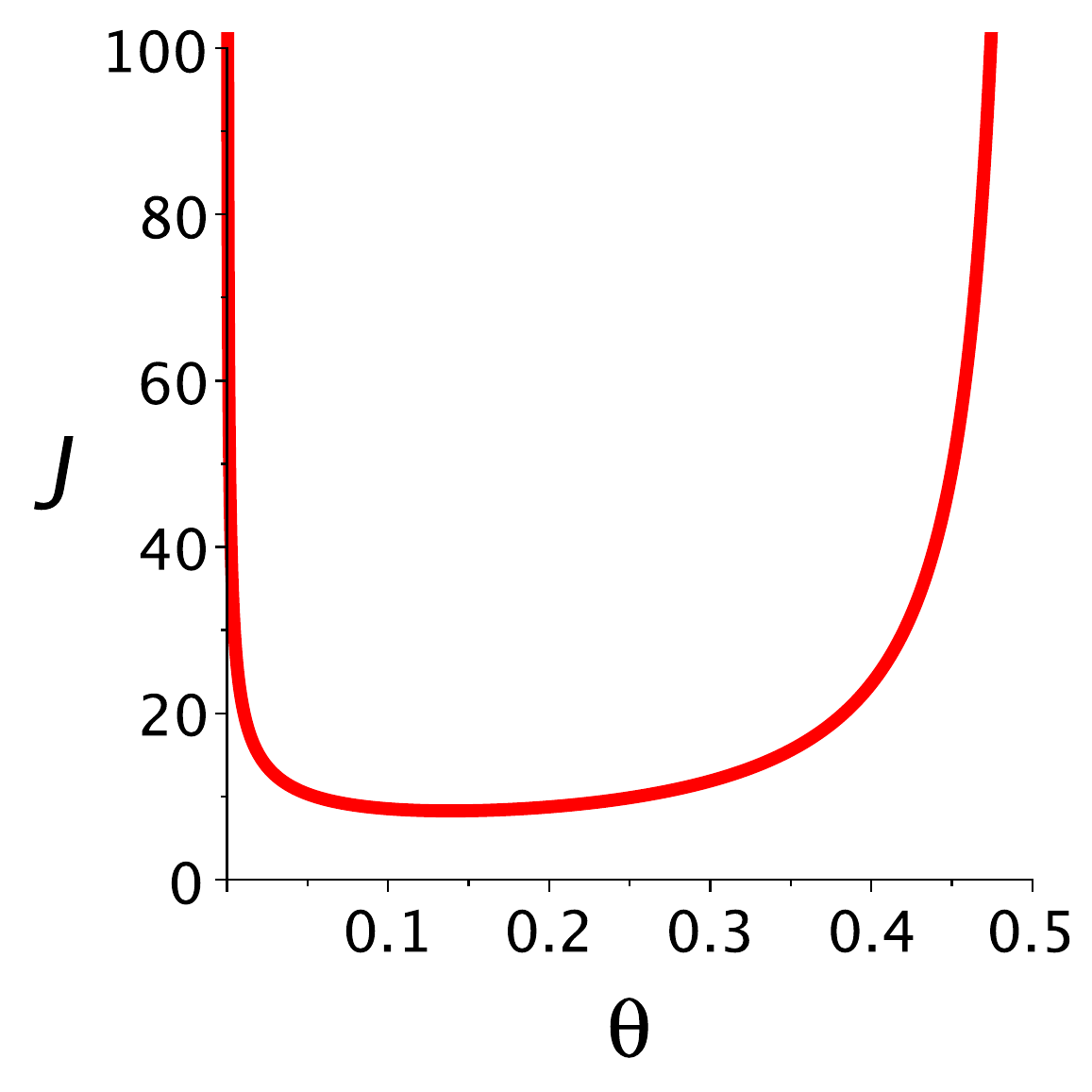}
  \caption{Plots of $\lambda$, $f$, and $J$ with respect to $\theta$. 
    Note that for $\theta \to 0^+$ it holds that $\lambda$ and $f$ tend to $1/4$ and $2\log(2) \approx 1.39$, 
    while for $\theta \to 1/2^-$ they tend to $0$ and $\log(2)-1 \approx -0.3$, respectively.
    The function $J$ tends to $+\infty$ for both these limits.
  }
  \label{fig:lambda-f-J}
\end{figure}

\begin{remark}[Number of vertices]\label{rem_vertices}
  For unicellular maps, Euler's formula states that the number of vertices $v \equiv v_n$ is given by $v = n + 1 - 2g$. 
  Therefore, the range of our main theorem is equivalent to $\frac{v}{\log n} \to \infty$.
\end{remark}

\begin{remark}[Genus zero]\label{rem_g0}
  For $g=0$, one needs to take the continuous limit $g\rightarrow 0$ in the right-hand side of~\eqref{eq_main_asympto}. This gives
  $\E(n,0)\sim \frac{4^n}{n^{3/2}\sqrt{\pi}},$
  which is consistently the asymptotics of the Catalan number $\frac{1}{n+1}\binom{2n}{n}$ that is the closed-form solution of $E(n,0)$ (unicellular maps of genus $0$). 
\end{remark}

\begin{remark}[Infinite genus]\label{rem:g-infinite}
  If we suppose further that $g_n \to \infty$, then using Stirling's approximation, we may simplify \eqref{eq_main_asympto} to:
  \[
    \E(n,g) \sim \frac{1}{2\sqrt{2}\pi} n^{2g-2} e^{n f(\frac{g}{n})} {J}\left(\frac{g}{n}\right).
  \]
  In fact, we started by guessing the simplified asymptotic form above, then adapted it to the case where $g_n$ is bounded. See \Cref{sec:guessing} for more details.
\end{remark}

It is important to note that \Cref{thm_main} has already been proven for $g\in[\eps n,(1/2-\eps)n]$ \cite{linear-genus} and $g=O(n^{1/3})$ \cite[Corollary~4]{meso-genus} using the combinatorics of the model. In fact, there exists a simple formula for the generating series of (a rescaling of) the $E(n,g)$ from which \Cref{thm_main} can be proven using classical saddle point analysis. The existence of a simple formula is very particular to the fact that the Harer Zagier recursion is equivalent to an order one linear differential equation for the generating series of a well chosen rescaling of the $E(n,g)$ (see~\Cref{sec:midv} for more details).

Therefore, in this paper, we present not only asymptotics for unicellular maps, but a \emph{general method} that should yield bivariate asymptotics results for many recurrences like~\eqref{eq:recurrence}, without having to rely on the combinatorics of the model, or a simple closed equation for the generating series, which is very rare. In fact our method is based on a ``guess and check'' approach, and in \cref{thm_walk_general}, we provide a general result for the checking part, using a random walk approach. See \cref{sec:discussion} for a more detailed discussion.

While \cref{thm_main} deals with the regime $\largev$, our methods also allow us to obtain asymptotics for the regime \smallv.

\begin{theorem}\label{thm_smallv}
  Given a sequence\footnote{We will use the notation $g \equiv g_n$ to denote the omission of the dependency of $g$ on $n$. 
  } $g \equiv g_n$ such that $\frac{n-2g}{\log n}\rightarrow 0$ as $n\to\infty$, the following asymptotics hold:
  \begin{equation}\label{eq_asympto_smallv}
    \E(n,g)\sim\frac{1}{\sqrt{\pi}} n^{-3/2} 2^n \frac{n!}{(n-2g)!}\log(n)^{n-2g}.
  \end{equation}
\end{theorem}

For the sake of completeness, we also include asymptotics in the regime $\frac{n-2g}{\log n}\to c$, although we do not use our approach to prove it, but more classical methods (the exact generating function and saddle point asymptotics).

\begin{theorem}\label{thm_logv}
  For every constant $c>0$ and sequence $g \equiv g_n$ such that $\frac{n-2g}{\log n}\rightarrow c$ as $n\to\infty$, the following asymptotics hold:
  \begin{equation}\label{eq_asympto_midv}
    \E(n,g)\sim \frac{2^{c}}{c\Gamma(c)}\frac{1}{\sqrt{\pi}} n^{-3/2} 2^n \frac{n!}{(n-2g)!}\log(n)^{n-2g}.
  \end{equation}
\end{theorem}

\section{Informal proof ideas: asymptotic guess and check}\label{sec:guess}

In this section, we present our methodology that we call \emph{asymptotic guess and check}, and how it is used to obtain our main result \Cref{thm_main}. It consists of several ingredients.

\paragraph{\textbf{Asymptotic guessing.}} Suppose that we have a linear bivariate recurrence on some quantity $E(n, g)$ with explicitly known boundary conditions. In our case, the recurrence \eqref{eq:recurrence} satisfies such conditions, with $E(n, g)$ known on the boundaries $g = 0$ and $n = 2g$. We start by guessing an explicit formula $\Omega(n, g)$ that satisfies the following conditions.
\begin{enumerate}
\item \textbf{Asymptotic initial condition}: $\Omega(n, g)$ agrees with known $E(n, g)$ \emph{asymptotically} on some boundary. In our case, for $n \to \infty$, we should have
  \[     
    \Omega(n,0) \sim \E(n,0).
  \]
\item \textbf{Asymptotic recurrence}: $\Omega(n, g)$ should follow the same recurrence as $E(n, g)$ \emph{asymptotically}, but here with a well-chosen and stricter notion of ``asymptotically'' than usual to better control the errors in later steps. In our case, we need the following to hold with a special notion of $\approx$:
  \begin{align*}
    (n+1) \Omega(n,g) \approx 2 (2n-1) \Omega(n-1, g) + (n-1) (2n-1) (2n-3) \Omega(n-2, g-1).
  \end{align*}
\end{enumerate}
Although such guessing may use prior knowledge about $E(n, g)$, it is not necessary. When not using prior knowledge, which is our case here, it can usually be carried out as follows:
\begin{enumerate}
\item By numerical computation, we first take $g = \theta n$ for several fixed values of $\theta$, and empirically determine the asymptotic form of $E(n, \theta n)$ as $n\to\infty$, where some terms in this asymptotic form are functions of $\theta$ that we call \emph{parametric functions}.
\item We then substitute such an asymptotic form into the discrete recurrence and develop it around some point $(n, \theta n)$ as $n\to\infty$. If our guessed form is correct, the terms of highest orders should involve only $\theta$, giving ordinary differential equations of the parametric functions. Solving these equations gives us our guess for the asymptotic behaviour of $E(n, g)$, whose precision depends on the number of terms we consider in the development of the discrete recurrence. Such a guess may need further refinement on the boundary by multiplying some extra terms to meet the precision requirement in the following step of asymptotic checking.
\end{enumerate}
In \Cref{sec:guessing}, we take these steps to obtain our guess for $\Omega(n, g)$.

\paragraph{\textbf{Asymptotic checking via random walks.}} As we want to show that $\Omega(n, g) \sim E(n, g)$ asymptotically, it is natural to consider the quotient $Q(n, g) = E(n, g) / \Omega(n, g)$ and try to show that $Q(n, g) \to 1$ for large $n$. The bivariate recurrence can be rewritten as a recurrence for $Q(n, g)$. In our case, it takes the form
\[
  Q(n, g) = \alpha(n, g) Q(n-1, g) + \beta(n, g) Q(n-2, g-1).
\]
We may use the (normalised) coefficients to define a random walk from $(n, g)$ to $(0, 0)$, and we denote by $(n', g')$ a generic point on it. In our case, the steps of the random walk are $(-1, 0)$ and $(-2, -1)$. To use such a random walk to show that $Q(n, g) \to 1$ when $n \to \infty$, we need the following conditions.
\begin{enumerate}
\item \textbf{Negligible cumulative error}: The random walk does not conserve the expectation of $Q(n', g')$ for points $(n', g')$ visited, as it introduces a multiplicative error at each step due to normalisation, because the coefficients do not sum to $1$ in general. However, if the asymptotic recurrence holds for $\Omega(n, g)$, with a sufficiently precise notion of $\approx$ (see \Cref{assum_alpha_beta} in our case), we may still control the error to be $o(1)$, and the random walk approximately conserves the expectation of $Q(n', g')$.
\item \textbf{Anchoring on ``good'' boundaries}: To anchor the value of $Q(n, g)$, we terminate the random walk at boundaries on which the exact value of $Q(n', g')$ can be computed. We will have a ``good'' boundary, where $Q(n', g') \to 1$ for large $n'$ when $(n', g')$ on this boundary, and a ``bad'' boundary, where $Q(n', g')$ is bounded by a constant (see \Cref{assum_bd_values}). This should be possible if our guess from the asymptotic guessing part has enough precision. It thus suffices to show that the random walk terminates on a ``good'' boundary almost surely. To this end, we use another quantity $s(n, g)$ whose expectation is also conserved approximately along the random walk, while behaving differently on ``good'' and ``bad'' boundaries (see \Cref{assum_behavior_s}). We then conclude with the fact that $Q(n', g')$ is bounded above by a constant on a ``bad'' boundary and $s(n, g)$ is approximately $o(1)$ in the considered region, but bounded below by a positive constant on the ``bad'' boundary.
\end{enumerate}
If the conditions above are satisfied, it is clear that the random walk almost surely ends on a ``good'' boundary, on which the expectation of $Q(n, g)$, which is approximately conserved, tends to $1$ when $n \to \infty$, showing that $\Omega(n, g) \sim E(n, g)$. We note that the conditions above may only hold within a certain region. It is thus necessary to perform similar analysis on every region we want to cover.

We remark that the asymptotic guessing and checking are independent of each other. Therefore, even for more general recurrences, as long as we can somehow guess an asymptotic form of the quantities that satisfies the conditions in the asymptotic checking part, we may still apply the same method to prove the validity of the guessed asymptotic form.

\section{Heuristic guessing} \label{sec:guessing}

We discuss now how we guess a bivariate asymptotic form $\Omega(n,g)$ for $E(n,g)$, as our proof will then revolve around the analysis of the ratios $E(n,g)/\Omega(n,g)$. 
First, we use the recurrence to compute the exact values $E(n,g)$ for $n\leq 1000$. Then, using standard, univariate empirical analysis (see \cite{guttmannser} and references therein), we derive precise asymptotic estimates for subsequences, e.g.,
\[\E(3g,g)\sim c_{1}g^{2g-2}\mu_{1}^{g}~~~~~\text{and}~~~~~\E(4g,g)\sim c_{2}g^{2g-2}\mu_{2}^{g},\]
with constants $c_{1}\approx 0.042124$ and $c_{2}\approx0.033183$, and growth rates $\mu_{1}\approx 117.923$ and $\mu_{2}\approx 1633.26$. Analysing $\E(n,\theta n)$ for different fixed, rational values of $\theta$ yields similar results, which leads us to predict that
\[\E(n,\theta n)\sim n^{2\theta n-2}e^{nf(\theta)}{J}\left(\theta\right),\]
for some functions $f$ and $J$. 
To determine the function $f$, we substitute the approximate expression above for $\E$ into \eqref{eq:recurrence} with $g=\theta n$, divide by the left-hand side, and then take the limit $n\to\infty$. This yields the differential equation
\[1=4e^{-2\theta-f(\theta)+\theta f'(\theta)}+4e^{-4\theta-2f(\theta)+2\theta f'(\theta)-f'(\theta)},\]
which can be solved exactly to give our expression for $f(\theta)$. To solve the differential equation, it helps to set 
\begin{equation}
    \label{eq:lambdadef}
    \lambda(\theta) = e^{-2\theta-f(\theta)+\theta f'(\theta)},
\end{equation}
as then we have
\begin{equation}
    \label{eq:nicelambdaf}
    1=4\lambda(\theta)+4\lambda(\theta)^{2}e^{-f'(\theta)}.
\end{equation}
On the one hand, combining \eqref{eq:lambdadef} and \eqref{eq:nicelambdaf} yields the expression \eqref{eq:f_solution} for $f$ in terms of $\lambda$ and $\theta$.
On the other hand, rearranging and differentiating \eqref{eq:lambdadef} and \eqref{eq:nicelambdaf} yields
\[
    \log(\lambda(\theta))'=-2+\theta f''(\theta) 
    \qquad \text{ and } \qquad 
    \log\left(\frac{1-4\lambda(\theta)}{4\lambda(\theta)^{2}}\right)'=-f''(\theta).
\]
Combining these equations to cancel the term $f''(\theta)$ yields 
\[\frac{\lambda'(\theta)}{\lambda(\theta)}=-2+\theta\left(\frac{4\lambda'(\theta)}{1-4\lambda(\theta)}+2\frac{\lambda'(\theta)}{\lambda(\theta)}\right).\]
This simplifies further if we consider the inverse function $\theta(\lambda)$ of $\lambda(\theta)$ (in some region where it is well-defined). In particular, we get the following linear differential equation:
\[2\theta'(\lambda)=-\frac{1}{\lambda}+\left(\frac{4}{1-4\lambda}+\frac{2}{\lambda}\right)\theta(\lambda).\]
Solving this yields the explicit solution~\eqref{eq:thetasolution} up to a constant, which we fix by setting $\theta(0)=1/2$ to match with the known genus $0$ asymptotics of $E(n,g)$.

To estimate $J(\theta)$ we again substitute our approximate expression for $\E$ into \eqref{eq:recurrence}, but this time we analyse the $n^{-1}$-term in the resulting equation. Doing so yields the equation
\[\frac{J'(\theta)}{J(\theta)}=\frac{4\kappa\lambda(\theta)-2\kappa+6\lambda(\theta)-4}{4\theta\lambda(\theta)-2\theta-4\lambda(\theta)+1}-\frac{4\theta\lambda(\theta)}{(4\theta\lambda(\theta)-2\theta-4\lambda(\theta)+1)^2}+f'(\theta)+2,\]
which can be solved up to a constant term.

Finally, we multiply the expression by $\frac{1}{2\sqrt{\pi}} \frac{\sqrt{g}(g/e)^g}{g!}$ so that it holds for fixed $g$, which gives our estimate \eqref{eq_main_asympto}. One can check that this is precisely consistent with our asymptotic estimates of $\E(3g,g)$ and $\E(4g,g)$.

\section{The random walk}\label{sec:RW}

The proofs of this section apply to the Harer-Zagier recursion, but the arguments used do not rely on the specifics of its coefficients, which hints at the fact that a general argument might exist. And indeed, we prove this in~\cref{sec:RW_general}.

\subsection{Setup of the walk}

Given an explicit family $\left(\Omega(n,g)\right)_{n,g \in I}$ of positive numbers, we introduce
\begin{align}
Q(n,g)&:=\frac{\E(n,g)}{\Omega(n,g)}, \label{eq_Q_def}\\
    \alpha(n,g) &:= \frac{2(2n-1)}{n+1} \cdot \frac{\Omega(n-1,g)}{\Omega(n,g)}, \label{eq_alpha_def}\\
    \beta(n,g) &:= \frac{(n-1)(2n-1)(2n-3)}{n+1} \cdot \frac{\Omega(n-2,g-1)}{\Omega(n,g)}. \label{eq_beta_def}
\end{align}
Then the Harer--Zagier recursion~\eqref{eq:recurrence} can be rewritten in the following compact form
\begin{equation}\label{eq_alpha_beta_Q}
    Q(n,g)=\alpha(n,g)Q(n-1,g)+\beta(n,g)Q(n-2,g-1).
\end{equation}

Additionally, our walk will be confined to a certain domain defined by two boundaries. 
These are described by two explicit non-decreasing sequences $\goodG$ and $\badG$. 
We require that $\goodG-\badG$ never changes sign, which means that these boundaries never cross.
Depending on which one dominates, we additionally define the following technical quantities $\interv$, $\badGG$, $\goodGG$, $s(n,g)$, which will lead to an intuitive description of our method.

\begin{center}
\begin{tabular}{@{} L L C C C@{}}
\toprule
\text{Description} & \text{Notation} & & \text{If }\goodG<\badG & \text{If }\goodG>\badG \\
\midrule
\text{Inner integer points} & \interv & =  & (\goodG,\badG)\cap\mathbb Z & (\badG,\goodG)\cap\mathbb Z \\
\text{Shifted bad boundary} & \badGG & =  & \badG-1                    & \badG+1 \\
\text{Shifted good boundary} & \goodGG & = & \goodG+1                   & \goodG-1 \\
 \text{Omega-quotient} & s(n,g) & =  & \dfrac{\Omega(n,g-1)}{\Omega(n,g)} & \dfrac{\Omega(n,g+1)}{\Omega(n,g)} \\
\bottomrule
\end{tabular}
\end{center}

The boundaries naturally define a cone $C$ given by the integer points between the boundaries:
$C = \{ (n,g) : g \in \interv \}.$
The shifted boundaries will be needed in the proofs and are crucial in \Cref{assum_behavior_s} below. Note that in general we cannot control the behaviour on the boundary, but on the shifted boundary we can. 
The Omega-quotient $s(n,g)$ compares $\Omega(n,g)$ for consecutive values of $g$ and fixed $n$. It will allow us to prove that the good boundary $\goodG$ attracts the random walk in most cases; see 
\Cref{prop_behavior_RW} below. 
Now we define a random walk which starts somewhere in $C$ and then moves towards the origin $(0,0)$.

\paragraph{The random walk}
Given $n,g$ we can define  the random walk $(N_k,G_k)_{k\geq 0}$ as follows:

\begin{itemize}
    \item \emph{Starting point:} $(N_0,G_0)=(n,g)$
    \item \emph{Stopping time $\tau$:} The walk is stopped as soon as $G_k=\goodGNK$ or $G_k=\badGNK$. We call $\tau=\tau(n,g)$ the stopping time.
    
    \item \emph{Transition:} At step $0\leq k<\tau$, we have the following transitions\footnote{The transitions have independent sources of randomness for each step.}:
    \begin{align}
    \label{eq:probarw}
    \begin{aligned}
    &(N_{k+1},G_{k+1})= (N_{k}-1,G_{k}) &\text{with probability} &\quad \frac{\alpha(N_k,G_k)}{\alpha(N_k,G_k)+\beta(N_k,G_k)},\\
    &(N_{k+1},G_{k+1})= (N_{k}-2,G_{k}-1)&\text{with probability} &\quad \frac{\beta(N_k,G_k)}{\alpha(N_k,G_k)+\beta(N_k,G_k)}.
    \end{aligned}
    \end{align}
\end{itemize}

\subsection{Main result}
Our main result relies three main conditions that we state below. This modular presentation will allow the result to be reused for similar problems. The key difficulty to proving these conditions will be finding a good explicit family $\Omega(n,g)$.
The first condition describes the condition that our weights $\alpha$ and $\beta$ are good approximations of probabilities, where ``good'' means that there is a summable error term. 
\begin{condition}[Good asymptotic probabilities]\label{assum_alpha_beta}
As $n\to\infty$, uniformly in \gstrict, and also for $g=\goodG$, 
\[\alpha(n,g)+\beta(n,g)=1+O\left((n-g)^{-1}\log^{-2}(n-g)\right).\]
\end{condition}

The second condition describes the asymptotic behaviour at the boundaries. 
The behaviour here is the reason for the naming conventions using the words ``good'' and ``bad''. 
On the good boundary, our approximation is perfect, while on the bad boundary it does not hold. 
\begin{condition}[Boundary values]\label{assum_bd_values}
    As $\nto$  we have
    \[Q(n,\goodG)\to 1 \quad \text{and}\quad Q(n,\badG)\to 0\]
\end{condition}

The third and final condition describes the behaviour of $s$. 
\begin{condition}[Behaviour of $s$]\label{assum_behavior_s}
The numbers $s(n,g)$ satisfy:
\begin{itemize}
    \item $s(n,\goodGG)>0$ for all $n\geq 0$;
    \item there exists a constant $c>0$ such that $s(n,\badGG)>c$ for all $n\geq 0$.
\end{itemize}
\end{condition}

If these three conditions are satisfied, then the following theorem holds. 
\begin{theorem}\label{thm_RW_to_asympto}
Suppose that \Cref{assum_alpha_beta,assum_bd_values,assum_behavior_s} are satisfied, and also that the sequence $g\equiv g_n$ satisfies $s(n,g)\to 0$ as $\nto$. Then for $\nto$ we have
\[\E(n,g)\sim\Omega(n,g).\]
\end{theorem}

In the following sections, we will apply this to the regimes $n-2g=\omega(\log n)$ and $n-2g=o(\log n)$ separately, with different definitions for $\Omega(n,g)$, $\goodG$, and $\badG$.

\subsection{Proof of \Cref{thm_RW_to_asympto}}

Recall that the random walk starts at $(N_0,G_0)=(n,g)$. Set 
\begin{align*}
    S_k&=s(N_k,G_k), &
    Q_k&=Q(N_k,G_k), & \text{and} &&
    M_k&=N_k-G_k. 
\end{align*}
Notice that $M_k$ is a deterministic quantity, and in particular: $M_k=M_0-k$. 
It has the following geometric interpretation:
After $k$ steps, the random walk is on the diagonal $y = x + M_k$.
This follows directly from the steps $(-1,0)$ and $(-2,-1)$ from \eqref{eq:probarw}.
This fact, along with the recurrence~\eqref{eq_alpha_beta_Q} and \Cref{assum_alpha_beta} shows that $Q_k$ does not change much in expected value (and the same goes for $S_k$ thanks to \Cref{assum_behavior_s}). To make this precise, we define the error terms
\begin{align*}
    r^{+}(k)&:=\prod_{j=k+1}^{\infty}\max\left(\left\{ \alpha(n,g)+\beta(n,g)  \mid n-g=j, \, 0<g<(n-1)/2 \right\}\right),\\
    r^{-}(k)&:=\prod_{j=k+1}^{\infty}\min(\left\{ \alpha(n,g)+\beta(n,g) \mid n-g=j, \, 0<g<(n-1)/2 \right\}).
\end{align*}
Since each $\Omega(n,g)$ is positive, by \eqref{eq_alpha_def} and \eqref{eq_beta_def}, the error terms $r^{-}(k)$ and $r^{+}(k)$ are also positive. Moreover, by \Cref{assum_alpha_beta}, they satisfy 
\begin{align}
    \label{eq:rpm_limit_bhr}
    r^{-}(k),r^{+}(k) &\to 1 \quad \text{ as $k\to\infty$}.
\end{align}
These quantities allow us to bound the expected value of $Q_\tau$, i.e., the value when the random walk hits one of the boundaries, compared to its initial value $Q_0$.
The key observation is that most of its initial value $Q_0$ is conserved in $Q_\tau$, as shown in the following result.

\begin{proposition}[Conserved quantity]
\label{prop_E(Q)}
We have 
    \[
        \mathbb{E}\left( \frac{r^{-}(M_{\tau})}{r^{-}(M_{0})}Q_\tau \right) \leq Q_0 \leq \mathbb{E}\left( \frac{r^{+}(M_{\tau})}{r^{+}(M_{0})}Q_\tau \right).
    \]
\end{proposition}

\begin{proof}
 In order to compute the expectation we condition on the the specific ending point on the boundary $\partial C = \{ (n',g') : g'=\good{n'} \text{ or } g'=\bad{n'}\}$ of our cone $C$. 
    We get
    \begin{align}
        \mathbb{E}(r^{-}(M_{\tau})Q_\tau) 
            &= \sum_{(n',g') \in \partial C} \mathbb{E}\left(r^{-}(M_{\tau})Q_\tau \mid (N_{\tau},G_{\tau}) = (n',g')\right) \mathbb{P}\left( (N_{\tau},G_{\tau}) = (n',g') \right) \notag \\
            &= \sum_{(n',g') \in \partial C} r^{-}(n'-g') Q(n',g') \mathbb{P}\left( (N_{\tau},G_{\tau}) = (n',g') \right) \notag \\
            &= \sum_{k \geq 2} \sum_{\substack{(n',g') \in \partial C\\n'-g'=k}} r^{-}(k) Q(n',g') \mathbb{P}\left( (N_{\tau},G_{\tau}) = (n',g') \right).\label{€q:Erminus_Qtau_bound}
    \end{align}
  
    We will now bound the probability that the random walk starting at $(n,g)$ ends at $(n',g')$, which appears in the expressions above; see~\eqref{eq:probarw}.
    Observe that each such path $\omega$ touches each diagonal $n''-g''=j$ at most once. 
    We denote by $\frac{\gamma_{j}(\omega)}{\alpha_{j}(\omega)+\beta_{j}(\omega)}$ the probability of the step of $\omega$ that starts at the diagonal $n''-g''=j$ of $\omega$, where $\gamma_j(\omega)$ is equal to either $\alpha_j(\omega)$ or $\beta_j(\omega)$, depending on whether the step is $(-1,0)$ or $(-2,-1)$, respectively.
    Therefore, we get by the definition of $r^{-}(k)$
    \begin{align*}
        r^{-}(k) \mathbb{P}\left( (N_{\tau},G_{\tau}) = (n',g') \right) 
            &= \sum_{\omega: (n,g) \to (n',g')} \prod_{j \geq k+1}^{n-g} \frac{\gamma_{j}(\omega)}{\alpha_{j}(\omega)+\beta_{j}(\omega)} r^{-}(k)\\
            &\leq r^{-}(n-g) \sum_{\omega: (n,g) \to (n',g')} \prod_{j \geq k+1}^{n-g} \gamma_{j}(\omega),
    \end{align*}
    where the sum runs over all paths $\omega$ from $(n,g)$ to $(n',g')$.  
    Note that in order to make the product above well-defined we set $\frac{\gamma_j(\omega)}{\alpha_j(\omega)+\beta_j(\omega)}=1$ if the walk does not cross the diagonal $n''-g''=j$.
    Using this bound in~\eqref{€q:Erminus_Qtau_bound} we get
    \begin{align*}
        \mathbb{E}(r^{-}(M_{\tau})Q_\tau) 
            &\leq r^{-}(n-g) \sum_{k \geq 2} \sum_{\substack{(n',g') \in \partial C\\n'-g'=k}} Q(n',g') \sum_{\omega: (n,g) \to (n',g')} \prod_{j \geq k+1}^{n-g} \gamma_{j}(\omega) \\
            &= r^{-}(M_0) Q(n,g),
    \end{align*}
    since the complicated sum is nothing else but the sum over all paths of the recurrence~\eqref{eq_alpha_beta_Q} from $(n,g)$ to the diagonal $\partial C$. 
    
    The upper bound follows analogously.
\end{proof}

We also need a similar (but coarser) property on $S_k$:

\begin{lemma}[Conserved quantity, bis]\label{lem_E(S)}
Let $S_k:=s(N_k,G_k)$, then
\[\mathbb{E}(S_{\tau-1})=\bigO{S_0}.\]
\end{lemma}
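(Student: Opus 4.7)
The plan is to exhibit $(S_k)$ as an approximate martingale with multiplicative errors that are summable in $k$, and then conclude via Doob's optional stopping at the (bounded) stopping time $\sigma := \tau - 1$.

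First I would compute the one-step conditional expectation. Unfolding the definitions at an interior position $(N_k,G_k) = (n,g)$, and using the telescoping identity $\Omega(n',g')\cdot s(n',g') = \Omega(n',g'-1)$, one obtains
\[\alpha(n,g)\,s(n-1,g) + \beta(n,g)\,s(n-2,g-1) \;=\; s(n,g)\,\bigl(\alpha(n,g-1) + \beta(n,g-1)\bigr),\]
since the coefficients $\frac{2(2n-1)}{n+1}$ and $\frac{(n-1)(2n-1)(2n-3)}{n+1}$ in $\alpha,\beta$ depend only on $n$, not on $g$. Dividing by $\alpha(n,g)+\beta(n,g)$ (the normalization of the transition probabilities) gives $\mathbb{E}[S_{k+1}\mid \mathcal{F}_k] = R_k\,S_k$ with
\[R_k := \frac{\alpha(N_k,G_k-1)+\beta(N_k,G_k-1)}{\alpha(N_k,G_k)+\beta(N_k,G_k)}.\]
Applying \Cref{prop_alpha_beta} to both $(N_k,G_k)$ and $(N_k,G_k-1)$ (both interior when $G_k\geq 2$), the numerator and denominator are each $1 + O(1/(M_k\log^2 M_k))$, so $R_k = 1 + \varepsilon_k$ with $|\varepsilon_k|\leq C\eta_k$, where $\eta_k := 1/(M_k\log^2 M_k)$ is \emph{deterministic} because $M_k = n-g-k$ is.

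Since $R_k$ is $\mathcal{F}_k$-measurable and positive, the process $T_k := S_k / \prod_{j<k} R_j$ is a genuine martingale. The stopping time $\sigma = \tau - 1$ is bounded by $n-g$, so Doob's optional stopping theorem gives $\mathbb{E}[T_\sigma] = T_0 = S_0$. The summability $\sum_{j}\eta_j < \infty$ (tail of $\sum_j 1/(j\log^2 j)$) then ensures that $U_\sigma := \prod_{j<\sigma} R_j$ lies in a deterministic bounded interval $[c, C'] \subset (0,\infty)$, uniformly in $n,g$. Therefore $S_\sigma = U_\sigma T_\sigma \leq C' T_\sigma$, and taking expectations yields $\mathbb{E}[S_\sigma] \leq C' S_0 = O(S_0)$, as required.

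The main obstacle is the boundary subtlety when $G_k = 1$: the shifted position has $G_k-1 = 0$, which falls outside the hypothesis of \Cref{prop_alpha_beta}, because at $g=0$ the coefficient $\beta$ involves the undefined $\Omega(\cdot,-1)$ and is effectively zero. A separate but analogous estimate is needed: one must show that $\alpha(n,0) = \frac{2(2n-1)}{n+1}\cdot \Omega(n-1,0)/\Omega(n,0) = 1 + O(\eta_n)$ with summable error. This would follow from a direct asymptotic expansion of $\Omega(n,0)$ matched against the Catalan closed form $\frac{1}{n+1}\binom{2n}{n}$, in the same spirit as (but simpler than) the proof of \Cref{prop_alpha_beta}. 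With this boundary estimate in hand, the martingale argument above goes through unchanged.
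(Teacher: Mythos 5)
Your proof is correct and follows essentially the same route as the paper: it rests on the same exact identity $\alpha(n,g)s(n-1,g)+\beta(n,g)s(n-2,g-1)=s(n,g)\bigl(\alpha(n,g-1)+\beta(n,g-1)\bigr)$, followed by the same approximate-conservation argument as in \Cref{prop_E(Q)} — your martingale-plus-optional-stopping phrasing is just a clean formalization of the paper's sandwich between products of $1+\bigO{(n-g)^{-1}\log^{-2}(n-g)}$ factors, which are uniformly bounded by summability. Your explicit flagging of the $G_k=1$ boundary step, where the shifted position has genus $0$ and \Cref{prop_alpha_beta} does not apply, is a genuine detail that the paper's one-line proof glosses over, and your proposed fix is sound (alternatively, since $\alpha(n,0)=Q(n,0)/Q(n-1,0)$, the product of these factors over consecutive $n$ telescopes and is uniformly bounded because $Q(n,0)\to 1$).
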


\begin{proof}

In the following, we use $\pm$ to mean $-$ if $\goodG<\badG$ and $+$ if $\goodG>\badG$, so that $s(n,g)$ is defined by
\[s(n,g)=\frac{\Omega(n,g\pm 1)}{\Omega(n,g)}.\] From the Definitions~\eqref{eq_alpha_def} and \eqref{eq_beta_def} for $\alpha(n,g)$ and $\beta(n,g)$, respectively, we immediately have the exact equality
\[\frac{\alpha(n,g)s(n-1,g)+\beta(n,g)s(n-2,g-1)}{s(n,g)}=\alpha(n,g\pm1)+\beta(n,g\pm1).\]
Here it is important that the first factors of $\alpha(n,g)$ and $\beta(n,g)$ in~\eqref{eq_alpha_def} and \eqref{eq_beta_def} are independent of~$g$. 
This equation can now be rewritten as
\begin{align*}
s(n,g) &=\frac{\alpha(n,g)}{\alpha(n,g\pm1)+\beta(n,g\pm1)}s(n-1,g)  + \frac{\beta(n,g)}{\alpha(n,g\pm1)+\beta(n,g\pm1)}s(n-2,g-1)\\
&=\left(1+O\left((n-g)^{-1}\log^{-2}(n-g)\right)\right)\left(\alpha(n,g)s(n-1,g)  + \beta(n,g)s(n-2,g-1)\right),
\end{align*}
where the second equality follows from~\cref{assum_alpha_beta}.
Therefore, recalling that $N_k-G_k$ is deterministic and equal to $N_0-G_0-k$, we get
\[\mathbb{E}(S_k)=\left(1+O\left((N_0-G_0-k)^{-1}\log^{-2}(N_0-G_0-k)\right)\right)\mathbb{E}(S_{k-1}),\]
and thus $\mathbb{E}(S_k)=O(S_0)$ for all $k$ since $\frac{1}{x\log(x)^2}$ is summable. This finishes the proof.
\end{proof}

\begin{proposition}[Behaviour of the random walk]\label{prop_behavior_RW}
    The sequence $g\equiv g_n$ of \Cref{thm_RW_to_asympto} satisfies the following: for all fixed $L>0$, the following holds as $\nto$
    \[\P(G_\tau=\goodGNT\text{ and } N_\tau>L)\to 1.\]
\end{proposition}

\begin{proof}
    On the one hand, by \Cref{lem_E(S)} one has, as $\nto$ 
    \[
        \P(G_\tau=\bado{N_\tau}) \mathbb{E}(S_{\tau-1} \mid G_\tau=\bado{N_\tau}) \leq \mathbb{E}(S_{\tau-1})=\bigO{S_0}=o(1),
    \]
    where we use the condition $S_{0}\to 0$ of \Cref{thm_RW_to_asympto}. But due to \Cref{assum_behavior_s}, we have the following
    \[\mathbb{E}(S_{\tau-1} \mid G_\tau=\bado{N_\tau}) = \mathbb{E}(s(N_{\tau-1},G_{\tau-1}) \mid G_\tau=\bado{N_\tau})>c>0.\]
    Therefore one has 
    \begin{equation}\label{eq_XX}
        \P(G_\tau=\bado{N_\tau})=o(1).
    \end{equation}

Fix now a constant $L$. Using again \Cref{lem_E(S)} and $S_{0}\to 0$, we have
\[
    \P(N_\tau\leq L\text{ and }G_\tau=\good{N_\tau})\mathbb{E}(S_{\tau-1} \mid N_\tau\leq L\text{ and }G_\tau=\good{N_\tau})\leq \mathbb{E}(S_{\tau-1})=o(1).
\]
Due to the other condition of~\Cref{assum_behavior_s} it holds that 
\[\mathbb{E}(S_{\tau-1} \mid N_\tau\leq L\text{ and }G_\tau=\good{N_\tau})\geq \min_{1\leq j\leq L+1}s(j,\goodo{j})>0.\] 
Therefore we have
\begin{equation}\label{eq_YY}
   \P(N_\tau\leq L\text{ and }G_\tau=\good{N_\tau})=o(1).
\end{equation}
Finally, \eqref{eq_XX} and \eqref{eq_YY} imply the result.
\end{proof}

We are finally ready to prove the main result of this section.
\begin{proof}[Proof of \Cref{thm_RW_to_asympto}]

    By~\Cref{assum_bd_values}, we can bound $Q(n,\goodG)$ and $Q(n,\badG)$ by a constant $C$ for all $n$, and therefore $Q_\tau\leq C$ deterministically. Hence, by \Cref{prop_behavior_RW}, it holds that
    \[\mathbb{E}(r^{\pm}(M_{\tau})Q_\tau)=\mathbb{E}(r^{\pm}(M_{\tau})Q_\tau \mid G_\tau=\goodGNT\text{ and }N_{\tau}>L)+o(1).\]
    Therefore, by \Cref{prop_E(Q)} and \eqref{eq:rpm_limit_bhr}, we have as $n\to\infty$
    \[Q(n,g)=Q_0\leq\frac{1}{r^{+}(M_0)}\mathbb{E}\left(r^{+}(M_{\tau}) Q_\tau \right)\leq \max_{N>L}(r^{+}(N)Q(N,\good{N}))+o(1).\]
    By~\Cref{assum_bd_values} we have $r^{+}(N)Q(N,\good{N})\to 1$ as $N\to\infty$. Hence, since the equation above holds for any fixed $L$, we have $Q(n,g)\leq 1+o(1)$.
    
    Using the same approach, but replacing $r^{+}$ with $r^{-}$ and reversing inequalities, we get the lower bound $Q(n,g)\geq 1+o(1)$. Combining these facts yields $Q(n,g)= 1+o(1)$, which completes the proof.
    
\end{proof}

\section{The regime $n-2g=\omega(\log n)$}\label{sec:largev}

Take $\goodG=0$ and $\badG=\lceil\frac{n-1}{2}\rceil$. Set \begin{align}\label{eq_def_Olarge}
    \Omega(n,g)&:=\frac{1}{2\sqrt{\pi}} \frac{\sqrt{g}(g/e)^g}{g!}n^{2g-2}e^{nf(\frac{g}{n})}{J}\left(\frac{g}{n}\right)K(n-2g)\color{red}{}\color{black} \qquad \text{ with } \\
    K(x)&:=\frac{\sqrt{2\pi} \, x^{x+1}}{e^{x} \, \Gamma(x+3/2)},
    \notag
\end{align}
where $f$ and $J$ are the same as in \Cref{thm_main}.
Note that this expression is a priori not well-defined for $g=0$ as $J(0)=\infty$.
However, as discussed in \Cref{rem_g0}, the asymptotic behaviour of the different terms involved allows us to define ``by continuity''
\[\Omega(n,0):=\frac{n^{-3/2}4^n}{\sqrt{\pi}}.\]

\begin{proposition}\label{prop_alpha_beta_largev}
   \Cref{assum_alpha_beta,assum_bd_values} hold.
\end{proposition}

\begin{proof}
    The proof of~\Cref{assum_alpha_beta} is one of the key results of this paper, however its proof is quite technical, hence we postpone it until \Cref{sec:alpha_beta_largev}.

Let us prove \Cref{assum_bd_values}. Note that if $g=\badG$, then $n=2g+1$. From the Harer--Zagier recursion~\eqref{eq:recurrence} we directly deduce
\begin{align*}
    E(n,0)&=\frac{1}{n+1}\binom{2n}{n} &
    & \text{ and } &
    E(2g+1,g)\sim 2\sqrt{2}\left(\frac{4g}{e}\right)^{2g}\log(g).
\end{align*}
On the other hand, we have $\Omega(n,0)=\frac{n^{-3/2}4^n}{\sqrt{\pi}}$, hence $Q(n,0)\to 1$ as $\nto$. And we also have
\begin{equation}\label{eq_bad_largev}
    \Omega(2g+1,g)\sim c\left(\frac{4g}{e}\right)^{2g}\log(g)^{3/2}
\end{equation}
for some constant $c>0$, 
whose proof we postpone to the appendix (see \cref{sec:proof_eq_bad_largev}).

This entails
\[Q(2g+1,g)\to 0.\]
\end{proof}
\begin{proposition}\label{prop_s_largev}
    \Cref{assum_behavior_s} holds and if $g\equiv g_n$ is such that $\smallv$, then $s(n,g)\to 0$ as $\nto$.
\end{proposition}

Once again, due to the technicality of the proof, it is delayed until \Cref{sec:s_largev}.

\section{The regime $n-2g=o(\log n)$}\label{sec:smallv}
Take $\goodG=n/2$ and $\badG=\left\lceil\frac{n-\badV}{2}\right\rceil$. Set
\begin{align}\label{eq_def_Osmall}
    \Omega(n,g) &:=\frac{1}{\sqrt{\pi}} n^{-3/2} 2^n \frac{n!}{(n-2g)!}\log(n)^{n-2g}
    \notag
\end{align}

\begin{proposition}
   \Cref{assum_alpha_beta,assum_bd_values} hold.
\end{proposition}

\begin{proof}\label{prop_alpha_beta_smallv}
Let us first prove \Cref{assum_alpha_beta}.
Write $v=n-2g$. Then, recalling that here $v=o(\sqrt n)$
\begin{align}
    \alpha(n,g)&=\frac{2(2n-1)}{n+1}\frac{\bigpar{1-\frac{1}{n}}^{-3/2}v{\log\bigpar{n-1}^{v-1}}}{2n\log(n)^v}\\
    &=\bigpar{1+\bigO{\frac{1}{n}}}\frac{2v}{n\log(n)}\bigpar{1+\frac{\log\bigpar{1-\frac{1}{n}}}{\log(n)}}^{v-1}\\
    &=\frac{2v}{n\log(n)} +\bigO{\frac{1}{n^{3/2}}}
\end{align}
and 
\begin{align}
    \beta(n,g)&=\frac{(n-1)(2n-1)(2n-3)}{n+1}\frac{\bigpar{1-\frac{2}{n}}^{-3/2}\bigpar{1+\frac{\log\bigpar{1-\frac{2}{n}}}{\log(n)}}^{v}}{2n(n-1)}\\
    &=\bigpar{1+\bigO{\frac{1}{n^2}}}\bigpar{1+\frac{\log\bigpar{1-\frac{2}{n}}}{\log(n)}}^{v}\\
    &=1-\frac{2v}{n\log(n)} +\bigO{\frac{1}{n^{3/2}}}.
\end{align}

Now we turn to \Cref{assum_bd_values}.  From the Harer--Zagier recursion~\eqref{eq:recurrence} we directly deduce
\begin{align*}
    E(n,\goodG)
        =\frac{(2n)!}{2^{n}(n+1)!},
\end{align*}
which are for even $n$ the number of rooted maps of genus $\frac{n}{2}$ with one vertex and one face (and $n$ edges); see \OEIS{A035319}.
But $\Omega(n,\goodG)=\frac{2^nn!}{\sqrt{n^3\pi}}$, hence by the Stirling formula we have $Q(n,\goodG)\to 1$ as $\nto$.

To finish the proof, it remains to prove the following 
\begin{equation}\label{eq_ratio_badV}
    Q(n,\badG)\to 0\quad \text{as }\nto.
\end{equation}
To prove this, we will use the results of the previous section. Indeed, $n-2\badG=\omega(\log(n))$, hence we just obtained asymptotics for $E(n,\badG)$ ! Due to its technicality, we postpone the proof of~\eqref{eq_ratio_badV} to \Cref{sec:badV}.
\end{proof}

\begin{proposition}\label{prop_s_smallv}
    \Cref{assum_behavior_s} holds and if $g\equiv g_n$ is such that $\smallv$, then $s(n,g)\to 0$ as $\nto$.
\end{proposition}

\begin{proof}
    We can directly calculate that 
\[s(n,g)=\frac{(n-2g-1)(n-2g)}{\log(n)^2}\]
and all the desired properties follow.
\end{proof}

\section{The regime $n-2g=\Theta(\log n)$}\label{sec:midv}

In this regime our methods do not work, since the random walk is not attracted to any  boundary.
However, it is possible to obtain the asymptotics using the saddle point method; see~\cite{Gao_log,logn-vertices} for related results.

We start from a representation of $E(n,g)$ in form of a special generating function. 
Harer and Zagier already showed in~\cite{HarerZagier1986Euler} (see also~\cite[Theorem~3.1.5]{LandoZvonkine}) the following:
\begin{align*}
    E(x,y) := 1 + 2xy + 2y \sum_{\substack{n > 0 \\ g\geq 0}} \frac{E(n,g)}{(2n-1)!!} x^{n-2g+1}y^{n} 
    = \left(\frac{1+y}{1-y}\right)^x.
\end{align*}
Hence, $y$ marks the number of edges and $x$ the number of vertices (which is equal to $n-2g+1$ in a unicellular map with $n$ edges).

\begin{remark}
Note that the rescaling by $(2n-1)!!$ originates from the fact that $\sum_{g \geq 0} E(n,g) = (2n-1)!!$, which is the number of perfect matchings of a $2n$-gon (corresponding to a gluing resulting in a unicellular map).
We may thus view $[y^{n+1}]E(n, g)/2$ as the probability generating function of the number of vertices in a uniformly random unicellular map for $n \geq 1$.  
Also note that Recurrence~\eqref{eq:recurrence} follows directly from this closed form. One way to see this is to observe that $E(x,y)$ satisfies the differential equation $(1-y^2)\frac{\partial E}{\partial y} = x E(x,y)$ using the elementary fact that $\left(\frac{1+y}{1-y}\right)^x = \frac{1+y}{1-y}\left(\frac{1+y}{1-y}\right)^{x-1}$.
Using the closed form it is also straightforward to show that this regime is the typical regime in the sense that if we draw a unicellular map with $n$ edges uniformly at random it will lie in this regime and therefore have $\Theta(\log(n))$ vertices.
\end{remark}

Now, we are interested in $E(n,g) = \frac{(2n-1)!!}{2} [x^{n-2g+1} y^{n+1}] E(x,y)$. 
In a first step, we extract the coefficient of $y^{n+1}$. We note that this part of the analysis applies for any fixed $x\in\mathbb{C}$.
For this step we use singularity analysis~\cite{flajolet}.
The dominant singularities of $E(y) \equiv E(x,y)$ are at $y=1$ and $y=-1$. In fact, $E(y)$ is holomorphic on $\mathbb{C}\setminus\left((-\infty,-1]\cup[1,\infty)\right)$.
In a neighbourhood $y \sim 1$, we get the singular expansion
\begin{align*}
    E(y) = 2^x (1-y)^{-x}\left(1 + \Landauo(1)\right).
\end{align*}
while in a neighborhood $y \sim -1$, we get
\begin{align*}
    E(y) = 2^{-x} (1+y)^{x}\left(1 + \Landauo(1)\right).
\end{align*}
The asymptotics of the coefficients is then the sum of the asymptotics of these two singular expansions. 

By the transfer theorems~\cite{flajolet} we get, (for $x\notin\mathbb{Z}$)
\begin{align}
    \label{eq:Engyextraction}
    [y^{n+1}] E(y) = \left(\frac{2^x n^{x-1}}{\Gamma(x)} + (-1)^{n+1}\frac{2^{-x} n^{-x-1}}{\Gamma(-x)}\right)\left(1 + \Landauo(1)\right).
\end{align}
In fact it can be easily checked that \eqref{eq:Engyextraction} also holds for $x\in\mathbb{Z}$, where one of the terms is $0$.
In the next step we need to extract the $x^{n-2g+1}$ coefficient from this expression. 
As a shorthand we set $m=n-2g+1$, which is the number of vertices of the corresponding map. 
Now, observe that the two first terms are symmetric in $x$ and $-x$. 
Extracting the term $[x^m]$ of the term involving $-x$ gives
\begin{align*}
    [x^m](-1)^{n+1}\frac{2^{-x} n^{-x-1}}{\Gamma(-x)} = (-1)^{n+1+m}[x^m]\frac{2^{x} n^{x-1}}{\Gamma(x)} = [x^m] \frac{2^x n^{x-1}}{\Gamma(x)}.
\end{align*}
Therefore, we get
\begin{align}
    \label{eq:labelEngBeforeSP}
    E(n,g) = \frac{(2n-1)!!}{n} [x^m] \frac{2^x n^{x}}{\Gamma(x)}\left(1 + \Landauo(1)\right),
\end{align}
where the factor $2$ got cancelled.

Now we use the saddle point method~\cite{flajolet} and first apply it only to the main term. 
Note that the error term still depends on $x$, so we will need to justify later why it is negligible. 
The saddle point method starts, like singularity analysis, with Cauchy's integral representation using a small enough circle around the origin:
\begin{align*}
    [x^m] \frac{2^x n^{x}}{\Gamma(x)} = \frac{1}{2 \pi i} \oint \frac{(2n)^{x}}{\Gamma(x) x^{m+1}} \, dx.
\end{align*}
The main idea is now to use the geometry of the complex integrand as shown in Figure~\ref{fig:saddlepoint}. 
The majority of its ``mass'' is centred around a peak, the saddle point, and can be approximated by a Gaussian integral, while the remaining part is negligible. 
\begin{figure}
    \centering
    \includegraphics[width=0.5\linewidth]{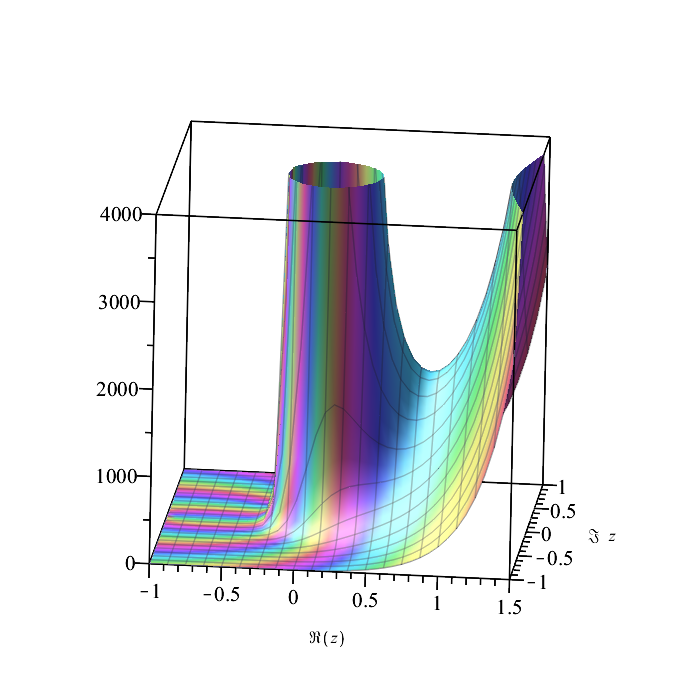}
    \caption{Plot of $|\frac{(2n)^x}{\Gamma(x) x^{m+1}}|$ for $n=1000$ and $m=7 \approx \log(n)$.}
    \label{fig:saddlepoint}
\end{figure}
Our specific expression in~\eqref{eq:labelEngBeforeSP} is a variant of the large-powers framework~\cite[Sections~VIII.8,~IX.5]{flajolet} to quasi-powers (also discussed in \cite[~IX.5]{flajolet}, where the results are less precise; see also the original work~\cite{Hwang1998CLT}). In our case we have 
\begin{align*}
    [x^m] \frac{2^x}{\Gamma(x)} e^{x \log(n)} = [x^m] A(x) B(x)^{\log(n)}.
\end{align*}
This falls into the framework of~\cite[Theorem~VIII.8]{flajolet} except that the powers $\log(n)$ are not integers. However, the proof of \cite[Theorem~VIII.8]{flajolet} does not use the assumption that the powers are integers beyond the fact that integer powers produce no new singularities. In our case this is not a problem as $B(x)^{\log(n)}$ has no singularities in $\mathbb{C}$.

The other conditions that we need to check to apply \cite[Theorem~VIII.8]{flajolet} are $L_1$--$L_3$ from the start of \cite[Section~VIII.8.1]{flajolet}. $L_2$ and $L_3$ are easily verified. While $A(x)$ is analytic at $0$, the second part of condition $L_1$ that $A(x)$ has non-negative coefficients is not satisfied. But, as mentioned in the discussion following the proof in~\cite[page~589]{flajolet}, the weaker assumption that $A(x)$ does not vanish on $(0,\infty)$ suffices. This is of course satisfied for $A(x)=\frac{2^x}{\Gamma(x)}$ since it is positive for $x>0$.

Finally, we can apply \cite[Theorem~VIII.8]{flajolet}. First, we compute the spread 
\[
    T = \lim_{x \to \infty} x \frac{B'(x)}{B(x)} = + \infty,
\]
since $B(x)=e^x$.

Next, we set $\lambda = \frac{m}{\log(n)}$. 
Recall that in this case $m \sim c \log(n)$, therefore $\lambda \sim c$. 
The saddle point $\zeta$ is then the unique root of
\begin{align*}
    \zeta \frac{B'(\zeta)}{B(\zeta)} = \lambda.
\end{align*}
This gives in our case $\zeta=\lambda$.
As a last quantity we need $\xi$ defined by
\begin{align*}
    \xi = \left.\left( \log(B(x)) - \lambda \log(x)\right)''\right|_{x=\zeta}
        = \frac{\lambda}{\zeta^2} = \frac{1}{\lambda}.
\end{align*}
Now we are able to use the generalisation of the large powers result for log-powers (observe how the terms $n$ are replaced by terms $\log n$). We get 
\begin{align*}
    [x^m] A(x) B(x)^{\log n} 
        \sim A(\zeta) \frac{B(\zeta)^{\log n}}{\zeta^{m+1}\sqrt{2 \pi \xi \log n   }}.
\end{align*}
Therefore, in our case we get
\begin{align*}
    [x^m] \frac{2^x}{\Gamma(x)} e^{x \log(n)} 
        \sim \frac{2^\lambda}{\Gamma(\lambda)} \frac{e^{\lambda \log n}}{\lambda^{m+1/2} \sqrt{2 \pi \log n}} 
        = \frac{2^\lambda}{\Gamma(\lambda)} \frac{e^{m} (\log n)^{m}}{m^{m+1/2} \sqrt{2 \pi}}
        \sim  \frac{2^c}{\Gamma(c)} \frac{(\log n)^{m}}{m!},
\end{align*}
where we used Stirling's formula and $\lambda \sim c$ in the last step.
Finally, we return to~\eqref{eq:labelEngBeforeSP}. 
Note that that the factor therein satisfies $\frac{(2n-1)!!}{2n} \sim \frac{2^n n!}{2 \sqrt{\pi n^3}}$ again by Stirling's formula. 
Therefore, we get the final result
\begin{align*}
    E(n,g) 
        \sim \frac{2^{c}}{c\Gamma(c)} \, \frac{1}{\sqrt{\pi}} \, n^{-3/2} \, 2^n \frac{ n! }{(n-2g)!} \, (\log n)^{n-2g}.
\end{align*}

At the end, we need to discuss why it is legitimate to ignore the error term in~\eqref{eq:labelEngBeforeSP} that depends on $x$.
The reasoning is similar to the one used in the quasi-powers framework~\cite{Hwang1998CLT} and work related to central limit theorems in combinatorial structures~\cite{FlajoletSoria1990Gaussian}.
The key observation is that the transfer theorems from singularity analysis give a uniform error term provided that the parameter $x$ stays in a compact domain bounded away from singularities. 
In our specific case, the saddle point $\lambda$ converges to $c$ and therefore we can choose an integration contour in the saddle point method in a compact region for $x$, i.e., inside $|x|<R$ for $R$ fixed and finite.
Therefore, for fixed $n$ there exists a uniform error term independent of $x$, which allows us to transfer the $o(1)$ error term in the extraction of $[x^m]$ such that $[x^m] \frac{2^x n^{x-1}}{\Gamma(x)} o(1) = o(1) [x^m] \frac{2^x n^{x-1}}{\Gamma(x)}$.
This concludes the proof.

\section{The random walk method: general results}\label{sec:RW_general}

In this section we generalise the second part of our proof, namely checking with the random walk method, to a large set of recurrences, under a set of minimal conditions. In particular, it can be applied as a black box in future works without mentioning random walks at all.

Consider the following problem, where we are given:
\begin{itemize}
    \item a \emph{step set}, i.e. a finite non empty subset $\mathcal S$ of $\mathbb (\mathbb N\times\mathbb N)\setminus \{(0,0)\}$.
    \item \emph{step weights}, i.e. a family of functions $(P_{i,j})_{i,j\in\mathcal S}$ sending positive integers to positive numbers.
\end{itemize}
and we want to obtain asymptotics for the numbers  $(E(n,g))_{n,g\geq 0}$ that satisfy the recurrence
\begin{equation}\label{eq_rec_general}
    E(n,g)=\sum_{i,j\in\mathcal{S}} P_{i,j}(n)E(n-i,g-j)
\end{equation}
together with given initial conditions.

Suppose that we obtain (by heuristic guessing or however else) an \emph{asymptotic guess}, i.e. some nonzero numbers $(\Omega(n,g))_{n,g\geq 0}$. Then, up to showing similar random walk results as in the previous section, we can show that indeed $\Omega(n,g)\sim E(n,g)$ in some regimes.

\begin{theorem}\label{thm_walk_general}
Given a sequence $g\equiv g_n$, assuming that~\Cref{assum_alpha_beta_general,assum_bd_values_general,assum_behavior_s_general,assum_staring_point_general} defined below are satisfied, we have, as $\nto$,
\[\Omega(n,g)\sim E(n,g).\]
\end{theorem}

The proof of this theorem will be very similar to the proof of~\cref{thm_RW_to_asympto}. For the sake of completeness, we will write it in full, but for the sake of readability, this will be done in the appendix (see \cref{sec:RW_general_proof}).

\subsection{Setup}

As before, we can define
\begin{align}
Q(n,g)&:=\frac{\E(n,g)}{\Omega(n,g)}, \\
    \alpha_{i,j}(n,g) &:= P_{i,j}(n)\cdot \frac{\Omega(n-i,g-j)}{\Omega(n,g)}\quad \text{for all }(i,j)\in\mathcal S. \label{eq_def_alpha_general}
\end{align}
Then~\eqref{eq_rec_general} rewrites into
\begin{equation}\label{eq_Q_general}
    Q(n,g)=\sum_{i,j\in\mathcal{S}} \alpha_{i,j}(n,g)Q(n-i,g-j).
\end{equation}

 We will consider subsets of $\ZZ_{\geq0}$. For each such subset $E$, we introduce the \emph{level lines}
\[E(m)=\{(n,g)\in E\text{ s.t. }n+g=m\}.\]

Now, we give ourselves three disjoint subsets of $\ZZ_{\geq0}$: $\mathcal I$ (the inner points), $\goodB$ and $\badB$ (the good and bad boundaries). We require that they satisfy the following conditions.
\begin{enumerate}
    \item Either for all $m$, for every $n^*,g^*\in\mathcal I (m)$
    \begin{equation}\max\{g\mid (n,g)\in \goodB(m)\}<g^*<\min\{g\mid (n,g)\in \badB(m)\}
    \label{eq_C-}\tag{C$-$}\end{equation}
   
    or for all $m$, for every $n^*,g^*\in\mathcal I (m)$
    \begin{equation}\max\{g\mid (n,g)\in \badB(m)\}<g^*<\min\{g\mid (n,g)\in \goodB(m)\}.\label{eq_C+}\tag{C$+$}\end{equation} We stress that that the chosen inequality must be the same for all $m$.
    \item For all $(n,g)\in \mathcal I$ and $(i,j)\in \mathcal{S}$, $(n-i,g-j)\in \mathcal I \cup\goodB\cup\badB$.
\end{enumerate}

\begin{remark}
    We need a more general notion of boundaries than in \cref{sec:RW}. Indeed, the step set can include large steps, and we want to allow for the case where one of the boundaries is $n=cst$ (which happens if $(0,1)$ belongs to the step set for instance).
\end{remark}

We can define the shifted boundaries as the set of points that are ``one step away'' from a boundary.
\[\goodBB=\{(n,g)\in \mathcal I\text{ s.t. }\exists (i,j)\in \mathcal S; (n-i,g-j)\in \goodB\}\]
and 
\[\badBB=\{(n,g)\in \mathcal I\text{ s.t. }\exists (i,j)\in \mathcal I; (n-i,g-j)\in \badB\}\]

Finally, the Omega ratio is defined as before:
\[s(n,g)=\frac{\Omega(n,g-1)}{\Omega(n,g)}\]
if \eqref{eq_C-} holds, and 
\[s(n,g)=\frac{\Omega(n,g+1)}{\Omega(n,g)}\]
if \eqref{eq_C+} holds.

\subsection{Conditions}
One can define an associated random walk as before, but it will only be needed for the proof, it is in some sense ``hidden'' in the conditions. 

\begin{condition}[Good asymptotic probabilities]\label{assum_alpha_beta_general}
There exists a summable function $\eta$ such that, as $m\to\infty$, uniformly for \mstrict, 
\[\sum_{i,j\in\mathcal{S}} \alpha_{i,j}(n,g)=1+O\left(\eta(m)\right).\]
\end{condition}

The second condition describes the asymptotic behaviour at the boundaries. 
Note that we require a weaker condition for the bad boundary than in \Cref{assum_bd_values}.
\begin{condition}[Boundary values]\label{assum_bd_values_general}
    As $m\to\infty$  we have,
    \[\max\{Q(n,g) \mid (n,g)\in \goodB(m)\}\to 1\text{ and }\min\{Q(n,g)\mid (n,g)\in \goodB(m)\}\to 1\]
    and, for all $(n,g)\in \badB$:
    \[Q(n,g)<C\]
    for some constant $C>0$.
\end{condition}

The third condition describes the behaviour of $s$. 
\begin{condition}[Behaviour of $s$]\label{assum_behavior_s_general}
The numbers $s(n,g)$ satisfy:
\begin{itemize}
    \item $s(n,g)>0$ for all $n,g\in \goodBB$;
    \item there exists a constant $c>0$ such that $s(n,g)>c$ for all $n,g\in \badBB$.
\end{itemize}
\end{condition}

Finally, we need a condition on the sequence $g_n$.
\begin{condition}[Good starting point]\label{assum_staring_point_general}
 The sequence $(g_n)_{n\geq 0}$ is such that
 \begin{itemize}
     \item $(n,g_n)\in \mathcal{I}$ for all $n$ large enough;
     \item $s(n,g_n)\to 0$ as $\nto$.
 \end{itemize}
\end{condition}

\section{Conclusion}\label{sec:discussion}

\subsection{Other works about enumeration and random walks}

Several other works\footnote{This list is most probably not exhaustive.} study links between recurrences, random walks, and enumeration:
\begin{itemize}
    \item In~\cite{compacted-tree} and later works, the first, second, and last author studied the asymptotic enumeration of a family of so-called compacted trees using a bivariate recurrence, also interpreted as space-dependent weighted walks as in this work. However, the walk there may leave the boundary, and the asymptotic initial condition there was unknown. Thus, in contrast to this work, no asymptotic equality but a $\Theta$-result was given there and a stretched exponential term appeared.
    The same method was later applied to solve the asymptotic enumeration of families of minimal DFAs, Young tableaux~\cite{banderier2021walls}, and phylogenetic networks~\cite{fuchs2021phylo}, always showing similar behaviour. 
    \item In~\cite{Agg} and two other works, large genus asymptotics of intersection numbers (a geometric quantity of interest) are obtained by ``comparing the coefficients in [some recursive] relations with the jump probabilities of a certain asymmetric simple random walk''.
    \item In~\cite{FlinChassaing}, the authors study the typical path of the random walk defined by well known linear recurrences, such as Pascal's triangle, and prove a scaling limit. 
\end{itemize}

As far as we understand, each of these three works (and the present one) studies a different setting. It is tempting (and rather ambitious) to ask for a general framework for ``asymptotics via random walks''. We conclude by stating how far we think our method applies, and the next steps we wish to take in this research program.

\subsection{Generality of the method}
In this paper, we presented bivariate asymptotics in one concrete case, but our method should apply to a larger class of recurrences associated to bivariate generating functions satisfying a linear ODE. This is in contrast with the ACSV approach, which currently only applies to a restricted class of algebraic generating functions.

More precisely, our approach could work for any numbers $E(n,k)$
with boundary conditions
\[E(0,0)=1 \qquad \text{ and } \qquad E(n,k)=0\text{ if } n<0 \text{ or }k<0,\]
which satisfy a recurrence of the type
\begin{align}\label{eq_generalized}
    A(n)E(n,k)=\sum_{i=1}^C\sum_{j=0}^D P_{i,j}(n)E(n-i,k-j),
\end{align}
where $C$ and $D$ are nonnegative integers, while $A$ and the $P_{i,j}$'s are polynomials.

The guessing part should go the same way as it did in this paper: 
with the ansatz that  $E(n, \theta n)$ behaves like $n^{c\theta n+d} \exp(n f(\theta))J(\theta)$ for some $c,d\in\mathbb{R}$ and some functions $f(\theta),J(\theta)$, \eqref{eq_generalized} gives a differential equation for $f(\theta)$ as in \Cref{sec:guess}, whose solution gives an estimate of $E(n, k)$. Then a differential equation for $J(\theta)$ could be obtained using the subdominant term as in our case. 

Then, for the checking part, one can use our general \cref{thm_RW_to_asympto}, provided that the conditions are satisfied. In upcoming works, we will classify more systematically which recurrences and which asymptotic regimes satisfy the conditions of \cref{thm_RW_to_asympto}, and we are looking for alternative tools when these conditions are not satisfied.

\subsection{Outlook}

Our next goal is to go beyond linear recurrences. For instance, another classical example of recurrence in map enumeration is the following one in \cite{KP-triangulation} for the number $\tau(n, g)$ of triangulations of genus $g$ and $2n$ faces:
\begin{align}
  \begin{split} \label{eq:KP-triangulation}
    (n + 1) \tau(n, g) &= 4n (3n - 2) (3n - 4) \tau(n - 2, g - 1) + 4 (3n - 1) \tau(n - 1, g)\\
                       &+ 4 \sum_{\substack{i+j=n-2\\i,j\geq 0}} \sum_{\substack{g_1+g_2=g\\g_1,g_2\geq 0}} (3i + 2) (3j + 2) \tau(i, g_1) \tau(j, g_2) + 2 \mathbbl{1}_{n=g=1}.
  \end{split}
\end{align}
We observe that this is a quadratic recurrence with a double sum, to which our random guess-and-check method does not entirely apply. Even if we can simplify the double sum by taking some reasonable approximation, at least one convolutive sum remains, and the random walk modelling does not seem adapted here. However, we may still try to do the asymptotic guessing, and we obtain the following using a combination of guessing and heuristic derivation.

\begin{conjecture} \label{conj:triangulation}
  Given a sequence $g\equiv g_{n}$ such that $g_n / n \to\theta \in (0, 1/2)$ as $n\to\infty$, the following asymptotics hold
  \[
    \tau(n, g) \sim \frac{1}{4(3\pi)^{3/2}} n^{2g-5/2} e^{n f\left(\frac{g}{n}\right)} J\left( \frac{g}{n} \right),
  \]
  with $f$ and $J$ defined as follows: For every $\theta\in(0,\frac{1}{2})$, let $h\equiv h(\theta)\in(0,\frac{1}{4})$ be the unique value satisfying
  \[\theta = \frac{1}{2} - \frac{3h}{(1 + 8h)\sqrt{1 - 4h}} \log \left(\frac{1 + \sqrt{1 - 4h}}{1 - \sqrt{1 - 4h}}\right).\]
  Then we define
  \begin{align*}
    f(\theta) &= 2\theta \log\left(\frac{6h}{(1 + 8h)\sqrt{1 - 4h}}\right) - 2\theta - \log\left(\frac{h}{(1 + 8h)^{3/2}}\right), \\
    J(\theta) &= \frac{(1-4h)(1+8h)^{5/2}}{h^{3/2}\sqrt{(1-2\theta)(1-4h)^2-12\theta h}}.
  \end{align*}
\end{conjecture}

One may compare \Cref{conj:triangulation} with the expression in \Cref{rem:g-infinite} to see the similar structure of the two bivariate asymptotics. With some extra fix inspired by the results from \cite{constant-tg}, we can write another form of \cref{conj:triangulation} that is valid down to $g_n = o(n)$. We are currently working actively on extending our approach to show some weaker version of \Cref{conj:triangulation} and similar asymptotic map-related enumeration problems from integrable systems (see, e.g., \cite{DYZ}).

In another direction, we have been made aware of a problem inspired by population genetics which models the probability of extinction of a species by a random walk in 2D with weighted steps~\cite{flower}. Our method allows us to conjecture the correct bivariate asymptotics for these probabilities, and we are trying to prove it.

Finally, we are working on a variation of our method that applies to recurrences with a \emph{bouncy wall}, where the sum in \eqref{eq_generalized} also allows $j<0$ as in \cite{compacted-tree}, meaning that the walk analogously defined to the one in \Cref{sec:RW} can now leave the $x$-axis.
In this case we expect to be able to determine the asymptotics up to some unknown universal constant.

\section*{Acknowledgements}

Andrew Elvey Price and Wenjie Fang were partially supported by ANR CartesEtPlus (ANR-23-CE48-0018) and ANR IsOMA (ANR-21-CE48-0007).
Baptiste Louf was partially supported by ANR CartesEtPlus (ANR-23-CE48-0018).
Michael Wallner was partially supported by the Austrian Science Fund (FWF): P~34142 and OeAD WTZ project FR~01/2023.

\printbibliography

\clearpage

\appendix

\section{Facts about $f$, $\lambda$ and $J$}\label{sec:facts_f_and_co}

In this section we discuss several important facts about $f$, $\lambda$ and $J$ defined in \Cref{thm_main}.
We start with some technical lemmas whose proofs are given in the companion Maple file.

As a first step, note that using the implicit function theorem it is straightforward to see that $\lambda(\theta)$ defined by \eqref{eq:thetasolution} is on $(0,1/2)$ a differentiable function. 
We will now discuss the behaviour of $\lambda$, $f$, and $J$ in detail. 
In particular, we will need precise information on their behaviour for $\theta \to 0$ and $\theta \to 1/2$.

\begin{lemma}\label{lem_prop_small_theta}
    As $\theta\to 0$, setting $\lambda:=\lambda(\theta)$, one has
\begin{equation}\label{eq_small_theta_lambda}
   \lambda(\theta) = \frac{1}{4}-\frac{3 \theta}{4}+\frac{9 \theta^{2}}{20}+\bigO{\theta^{\frac{5}{2}}}.
\end{equation}
Furthermore, it holds that
\begin{align}
    f(\theta)&=\log4-\theta\log\theta+\theta-\theta\log 12+\frac{27}{10}\theta^2+\bigO{\theta^3},\label{eq_f_as_theta_to_0}\\
    f'(\theta)&=-\log\theta+O(1),\label{eq_fprim_as_theta_to_0}\\
    f''(\theta)&=\Theta(\theta^{-1}),\label{eq_fsec_as_theta_to_0}\\
    f'''(\theta)&=\Theta(\theta^{-2}),\label{eq_fter_as_theta_to_0}
\end{align}
and
\begin{align}
    J(\theta)&=\frac{2}{\sqrt \theta}+\frac{27}{5}\sqrt \theta+\bigO{\theta^{3/2}}\label{eq_J_as_theta_to_0},\\
    J'(\theta)&=\Theta(\theta^{-3/2}),\label{eq_Jprim_as_theta_to_0}\\
    J''(\theta)&=\Theta(\theta^{-7/2}).\label{eq_Jsec_as_theta_to_0}
\end{align}
\end{lemma}

\begin{lemma}\label{lem_prop_large_theta}
    As $\theta\to 1/2$, setting $\lambda:=\lambda(\theta)$, one has
\begin{equation}\label{eq_large_theta_lambda}
   \frac{1}{2}-\theta =-\lambda\log(\lambda)-2\lambda^{2}(1+\log(\lambda))+O(\lambda^{3}\log(\lambda)).
\end{equation}
Furthermore, it holds that
\begin{align}
    f(\theta)&=\log\! \left(2\right)-1+\left(2+2 \log\! \left(\lambda \right)^{2}+\left(2 \log\! \left(2\right)-2\right) \log\! \left(\lambda \right)\right) \lambda +\mathrm{O}\! \left(\lambda^{2}\right),\label{eq_f_as_theta_large}\\
    f'(\theta)&=2\log \lambda+2\log(2)+\bigO{\lambda},\label{eq_fprim_as_theta_large}\\
    f''(\theta)&=\frac{2}{\lambda(1+\log(\lambda))}+\bigO{1}=\bigTheta{\halfminth^{-1}},\label{eq_fsec_as_theta_large}\\
    f'''(\theta)&=\Theta\left(\left(\frac 1 2 -\theta \right)^{-2}\right),\label{eq_fter_as_theta_large}
    \end{align}
and
\begin{align}
    J(\theta)&=\sqrt{-\frac{1}{\log\! \left(\lambda \right)+1}}\, \lambda^{-1}-\frac{\sqrt{-\frac{1}{\log\left(\lambda \right)+1}}}{\log\! \left(\lambda \right)+1}+\mathrm{O}\! \left(\lambda \right),\label{eq_J_as_theta_large}\\
   J'(\theta)&=\frac{2\log(\lambda)+3}{2\lambda^{2}(-\log(\lambda)-1)^{5/2}}+\bigO{\frac{1}{\lambda\log(\lambda)}},\label{eq_Jprim_as_theta_large}\\
   J''(\theta)&=\bigTheta{\frac{\sqrt{-\log \lambda}}{(\lambda\log\lambda)^3}},\label{eq_Jsec_as_theta_large}\\
   \frac{J'(\theta)}{J(\theta)}&=-\frac{1}{\lambda\log(\lambda)}-\frac{1}{2\lambda\log(\lambda)^{2}}+\mathrm{O}\! \left(\frac{1}{\lambda\log(\lambda)^4} \right).\label{eq_Jlogprim_as_theta_large}
    \end{align}    
\end{lemma}

In the following lemma, we make the error terms in the Taylor series explicit.
\begin{lemma}\label{lem_taylor_f_J}
For all $0<\theta<1/2$ (including the cases $\theta=o(1)$ and $\frac 1 2 -\theta =o(1)$), and for all $\delta=o\left(\theta\left(\frac 1 2 -\theta\right)\right)$ it holds that
\begin{align}
\begin{aligned}
f(\theta+\delta) 
    &= f(\theta)+\delta f'(\theta)+O\left(\delta^2\theta^{-1}\left(\frac 1 2 -\theta \right)^{-1}\right)\\
    &= f(\theta)+\delta f'(\theta)+\frac{\delta^2}{2}f''(\theta)+O\left(\delta^3\theta^{-2}\left(\frac 1 2 -\theta \right)^{-2}\right)
\end{aligned}
\label{eq_fshift}
\end{align}
and
\begin{align}
\begin{aligned}
J(\theta+\delta)
    &= J(\theta)\left(1+O\left(\delta^{1}\theta^{-1}\left(\frac 1 2 -\theta \right)^{-1}\right)\right)\\
    &= J(\theta)\left(1+\delta\frac{J'(\theta)}{J(\theta)}+O\left(\delta^{2}\theta^{-2}\left(\frac 1 2 -\theta \right)^{-2}\right)\right).
\end{aligned}
\label{eq_Jshift}
\end{align}

\end{lemma}
\begin{proof}
This follows directly from Taylor expansions for $f$ and $J$ as well as
\Cref{lem_prop_large_theta,lem_prop_small_theta}.
%
\end{proof}

\section{Proof of \Cref{assum_alpha_beta} for \largev}\label{sec:alpha_beta_largev}

Recall that for all integers $n\geq 0$ and $g\leq (n-1)/2$ we define
\begin{align*}
    \Omega(n,g)&:=\frac{1}{2\sqrt{\pi}} \frac{\sqrt{g}(g/e)^g}{g!}n^{2g-2}e^{nf(\frac{g}{n})}{J}\left(\frac{g}{n}\right)K(n-2g)\color{red}{}\color{black} \qquad \text{ with } \\
    K(x)&:=\frac{\sqrt{2\pi} \, x^{x+1}}{e^{x} \, \Gamma(x+3/2)}
\end{align*}
where $f$ and $J$ are the same as in \Cref{thm_main} (recall \Cref{rem_g0} for $g=0$).
Note that $K(x) \rightarrow 1$ as $x\to\infty$.

Now we need to find the asymptotic expansions of $\alpha(n,g) + \beta(n,g)$, defined in~\eqref{eq_alpha_def} and~\eqref{eq_beta_def}, which we recall here:
\begin{align*}
        \alpha(n,g) &= \frac{2(2n-1)}{n+1} \cdot \frac{\Omega(n-1,g)}{\Omega(n,g)},\\
    \beta(n,g) &= \frac{(n-1)(2n-1)(2n-3)}{n+1} \cdot \frac{\Omega(n-2,g-1)}{\Omega(n,g)}.
\end{align*}
The key is to compute the asymptotics of the quotients of $\Omega(n,g)$ appearing in them. 
Let us start by making them more explicit for this specific choice of $\Omega(n,g)$:
\begin{align}
    \frac{\Omega(n-1,g)}{\Omega(n,g)}
        &= \left(1-\frac{1}{n}\right)^{2g-2} e^{(n-1)f(\frac{g}{n-1})-nf(\frac{g}{n})} \frac{{J}\left(\frac{g}{n-1}\right)}{{J}\left(\frac{g}{n}\right)} \cdot \frac{K(n-2g-1)}{ K(n-2g)} , \label{eq:Omega_alpha_quot}\\
    \frac{\Omega(n-2,g-1)}{\Omega(n,g)} 
        &=  \frac{e}{(n-2)^2}
            \left(1-\frac{1}{g}\right)^{g-1/2}
            \left(1-\frac{2}{n}\right)^{2g-2}
            e^{(n-2)f(\frac{g-1}{n-2})-nf(\frac{g}{n})}  
            \cdot \frac{{J}\left(\frac{g-1}{n-2}\right)}{{J}\left(\frac{g}{n}\right)}. \label{eq:Omega_beta_quot}
\end{align}

We will split the analysis into the following three regimes, which will be treated separately in the following three subsections:
\begin{enumerate}
    \item Very low genus $g<\frac{n^{1/3}}{\log n}$,
    \item Intermediate genus $\frac{n^{1/3}}{\log n}\leq g\leq \frac{n-n^{1/3}}{2}$,
    \item Very high genus $\frac{n-n^{1/3}}{2} < g$.
\end{enumerate}
The key difficulty is to make the error terms precise. 
However, most of the given error terms are not sharp, yet when combining the three regimes the dominant one will be of order $\err$. For this reason, we will give all results up to this order.

\subsection{Very low genus}
We first consider the regime $g<\frac{n^{1/3}}{\log n}$. 
For this section only, we introduce
\[U(n,g):=(g/e)^ge^{nf(\frac{g}{n})} \qquad \text{ and } \qquad V(n,g):=\sqrt{g}J(n/g).\]
By~\eqref{eq_f_as_theta_to_0} and \eqref{eq_J_as_theta_to_0}, respectively, we get, as $\nto$ uniformly for $1\leq g<\frac{n^{1/3}}{\log n}$:
\begin{equation}\label{eq_U_low}
    U(n,g)=\exp\bigpar{n\log4+g\log n-g\log12+\frac{27}{10}\frac{g^2}{n}+\bigO{\frac{g^3}{n^2}}}
\end{equation}
and
\begin{equation}\label{eq_V_low}
    V(n,g)= 2\sqrt{n}+\frac{27}{5}\frac{g}{\sqrt{n}}+\bigO{\frac{g^2}{n^{3/2}}}.
\end{equation}
Now we will estimate $\alpha(n,g)$ and $\beta(n,g)$.
Note that in this regime the error terms are not sharp.
\begin{lemma}
    As $n\to\infty$ and uniformly for $1\leq g<\frac{n^{1/3}}{\log n}$, we have
    \[\alpha(n,g)=1-\frac{3g}{n}+\err.\]
\end{lemma}

\begin{proof}
By~\eqref{eq_U_low}, we have
\[\frac{U(n-1,g)}{U(n,g)}=\frac{1-\frac{g}{n}+\err}{4},\]
and, by~\eqref{eq_V_low},
\[\frac{V(n-1,g)}{V(n,g)}=1-\frac{1}{2n}+\err.\]
It is also easily checked that

\[\frac{K(n-2g-1)}{ K(n-2g)}=1+\err. \]
Therefore, we get the final result
\begin{align*}
    \alpha(n,g)&=\frac{2(2n-1)}{n+1}\frac{(n-1)^{2g-2}}{n^{2g-2}}\frac{U(n-1,g)}{U(n,g)}\frac{V(n-1,g)}{V(n,g)}\frac{K(n-2g-1)}{ K(n-2g)}\\
    &=4\bigpar{1-\frac{3}{2n}}\bigpar{1+\frac{2}{n}-\frac{2g}{n}}\bigpar{\frac{1-\frac{g}{n}}{4}}\bigpar{1-\frac{1}{2n}}+\err\\
    &=1-\frac{3g}{n}+\err. \qedhere
\end{align*}
\end{proof}

\begin{lemma}
    As $n\to\infty$ and uniformly for $1\leq g<\frac{n^{1/3}}{\log n}$, we have
    \[\beta(n,g)=\frac{3g}{n}+\err.\]
\end{lemma}

\begin{proof}
Since $\beta(n,1)$ involves $\Omega(n,0)$ which has a separate definition the case $g=1$ needs to be treated separately, we leave it as an exercise to the reader.

Similarly as in the previous lemma, one can show 
\[\frac{U(n-2,g-1)}{U(n,g)}=\frac{3}{4n}\bigpar{1+\bigO{\frac{g}{n}}}\]
and
\[\frac{V(n-2,g-1)}{V(n,g)}=1+\bigO{\frac{1}{n}}.\]
Now we get
\begin{align*}
    \beta(n,g)&=\frac{(n-1)(2n-1)(2n-3)}{(n+1)(n-2)^2}\frac{(n-2)^{2g-2}}{n^{2g-2}}\frac{gU(n-2,g-1)}{U(n,g)}\frac{V(n-2,g-1)}{V(n,g)}\\
    &\bigpar{4+\bigO{\frac{1}{n}}}\bigpar{1+\bigO{\frac{g}{n}}}\cdot\frac{3g}{4n}\cdot\bigpar{1+\bigO{\frac{g}{n}}}\bigpar{1+\bigO{\frac{1}{n}}}\\
    &=\frac{3g}{n}+\err. \qedhere
\end{align*}
\end{proof}

\subsection{Intermediate genus}\label{sec:intermediate_genus}

We now work in the regime $\frac{n^{1/3}}{\log n}\leq g\leq \frac{n-n^{1/3}}{2}$. All small $o$'s and big $O$'s will be uniform over the whole range. In this section we set 
\[
    v=n-2g \qquad \text{ and } \qquad \theta=\frac{g}{n}.
\]

We start now with an asymptotic expansion for $\alpha(n,g)$.
Note that as in the previous regime the error terms are not sharp.

\begin{lemma}\label{lem_alpha_intermediate}
In the range $\frac{n^{1/3}}{\log n}\leq g\leq \frac{n-n^{1/3}}{2}$, we have, uniformly
    \[\alpha(n,g)= 4\lambda(\theta)\left(1+\frac{1}{n}\left(\frac{\theta J'\left(\theta\right)}{J\left(\theta\right)}+\frac{1}{2}-{\theta}+\frac{\theta^2}{2}f''(\theta)\right)\right)+\err\]
\end{lemma}

\begin{proof}
We need to understand the asymptotics of the terms in~\eqref{eq:Omega_alpha_quot}. 
First, we apply \Cref{lem_taylor_f_J} with $\delta=\frac{g}{n-1}-\frac{g}{n}=\frac{g}{n(n-1)}=\frac{\theta}{n-1}$, where $\theta=\frac{g}{n}$. We get
\begin{align}
(n-1)\f{g}{n-1}
    &= nf(\theta)-f(\theta)+\theta f'(\theta)+\frac{\theta^2}{2(n-1)}f''(\theta)+\bigO{\theta\halfminth^{-2}n^{-2}}\notag\\
    &= nf(\theta)-f(\theta)+\theta f'(\theta)+\frac{\theta^2}{2n}f''(\theta)+\bigO{\frac{1}{v^2}},\label{aaa1}
\end{align}
because a direct computation shows that $1/2-\theta=2v/n = O(1)$, and additionally we have $\theta^2f''(\theta)=O(n/v)$ by~\eqref{eq_fsec_as_theta_to_0} and~\eqref{eq_fsec_as_theta_large}. 
Similarly, we compute
\begin{align}
\frac{J\left(\frac{g}{n-1}\right)}{J(\theta)}&=1+\frac{\theta}{n-1}\frac{J'\left(\theta\right)}{J(\theta)}+\bigO{\halfminth^{-2} n^{-2}}\notag\\
&=\bigpar{1+\frac{\theta}{n}\frac{J'\left(\theta\right)}{J(\theta)}}\bigpar{1+\bigO{\frac{1}{v^2}}}, \label{aaa2}
\end{align}
because  $\frac{\theta J'(\theta)}{J(\theta)}=O(n/v)$ by~\eqref{eq_J_as_theta_to_0},~\eqref{eq_Jprim_as_theta_to_0} for $\theta\to 0$ and by~\eqref{eq_J_as_theta_large},~\eqref{eq_Jprim_as_theta_large} for $\theta\to1/2$.

Also it holds that
\begin{equation}\label{aaa3}
    \left(1-\frac{1}{n}\right)^{2g-2}   
    =\exp(-2\theta)\left(1+\frac{2}{n}-\frac{\theta}{n}\right)\bigpar{1+\bigO{\frac{1}{v^2}}}.
\end{equation}

It is also easily checked that
\[\frac{K(n-2g-1)}{ K(n-2g)}=1+\bigO{\frac{1}{v^2}} \]

So, combining this with~\eqref{aaa1},~\eqref{aaa2} and~\eqref{aaa3}, one gets
\begin{align*}
\alpha(n,g)&=\frac{2(2n-1)}{n+1}\frac{\Omega(n-1,g)}{\Omega(n,g)}\\
&=4\left(1-\frac{3}{2n}\right)\exp(-2\theta)\left(1+\frac{2}{n}-\frac{\theta}{n}\right)\exp\left(-f(\theta)+\theta f'(\theta)+\frac{\theta^2}{2n}f''(\theta)\right)\\
&\quad\times\left(1+\frac{\theta J'\left(\theta\right)}{nJ\left(\theta\right)}\right)\bigpar{1+\bigO{\frac{1}{v^2}}}\\
&=4\lambda(\theta)\left(1+\frac{1}{n}\left(\frac{\theta J'\left(\theta\right)}{J\left(\theta\right)}+\frac{1}{2}-{\theta}+\frac{\theta^2}{2}f''(\theta)\right)\right)\bigpar{1+\bigO{\frac{1}{v^2}}}
\end{align*}
because $\lambda(\theta)=\exp(-2\theta-f(\theta)+\theta f'(\theta))$ due to~\eqref{eq:lambdadef}. 
Finally, we need to show that

\[4\lambda(\theta)\left(1+\frac{1}{n}\left(\frac{\theta J'\left(\theta\right)}{J\left(\theta\right)}+\frac{1}{2}-{\theta}+\frac{\theta^2}{2}f''(\theta)\right)\right)\times \frac{1}{v^2}=\err.\]

But this holds because $\lambda(\theta)=O(n^{-2/3} \log(n)^{-1})$ and $v=O(n^{1/3})$ uniformly in the range we consider, and also that 
\[\frac{1}{n}\left(\frac{\theta J'\left(\theta\right)}{J\left(\theta\right)}+\frac{1}{2}-{\theta}+\frac{\theta^2}{2}f''(\theta)\right)=O(1)\]
once again, uniformly in the range we consider. These properties follow by a quick calculation using \cref{lem_prop_small_theta,lem_prop_large_theta}.
\end{proof}

In a similar fashion we continue with $\beta(n,g)$.

\begin{lemma}\label{lem_beta_intermediate}
In the range $\frac{n^{1/3}}{\log n}\leq g\leq \frac{n-n^{1/3}}{2}$, we have, uniformly
    \[\beta(n,g)=(1-4\lambda(\theta))\left(1+\frac{1}{n}\left(4-4\theta+\frac{(2\theta-1)^2}{2}f''(\theta)+\frac{(2\theta-1)J'\left(\theta\right)}{J\left(\theta\right)}\right)\right)+\err.\]
\end{lemma}

\begin{proof}
For the asymptotics of $\beta(n,g)$ we need to understand the asymptotics of the terms in~\eqref{eq:Omega_beta_quot}. 
First, we apply \Cref{lem_taylor_f_J} with $\delta=\frac{g-1}{n-2}-\frac{g}{n}=\frac{2g-n}{n(n-2)}=\frac{2\theta-1}{n-2}$, where $\theta=\frac{g}{n}$. We get
\begin{align}
(n-2)\f{g}{n-1}&=nf(\theta)-2f(\theta)+(2\theta-1)f'(\theta)+\frac{(2\theta-1)^2}{2(n-2)}f''(\theta)+\bigO{\frac{\halfminth\theta^{-2}}{n^2}}\notag\\
&=nf(\theta)-2f(\theta)+(2\theta-1)f'(\theta)+\frac{(2\theta-1)^2}{2n}f''(\theta)+\bigO{\frac{1}{g^2}},\label{bbb1}
\end{align}
because  $(2\theta-1)^2f''(\theta)=O(n/g)$ by~\eqref{eq_fsec_as_theta_to_0} and~\eqref{eq_fsec_as_theta_large}.

Similarly, we get
\begin{align}
\frac{J\left(\frac{g-1}{n-2}\right)}{J(\theta)}&=1+\frac{2\theta-1}{n-2}\frac{J'\left(\theta\right)}{J(\theta)}+\bigO{\frac{\theta^{-2}}{n^2}}\notag\\
&=\bigpar{1+\frac{2\theta-1}{n}\frac{J'\left(\theta\right)}{J\left(\theta\right)}}\bigpar{1+\bigO{\frac{1}{g^2}}},\label{bbb2}
\end{align}
because  $\frac{(2\theta-1) J'(\theta)}{J(\theta)}=O(n/g)$ 
by~\eqref{eq_J_as_theta_to_0},~\eqref{eq_Jprim_as_theta_to_0} for $\theta\to 0$ and by~\eqref{eq_J_as_theta_large},~\eqref{eq_Jprim_as_theta_large} for $\theta\to1/2$, since
 $\lambda(\theta)=O(n^{-2/3} \log(n)^{-1})$ for $\theta\to1/2$.

Also, a direct computation shows
\begin{equation}\label{bbb3}
   \frac{1}{(n-2)^2}\left(1-\frac{2}{n}\right)^{2g-2}
   =\frac{\exp(-4\theta)}{n^2}\left(1+\frac{8}{n}-\frac{4\theta}{n}\right)\bigpar{1+\bigO{\frac{1}{g^2}}}.
\end{equation}
It is also easily checked that 
\[
    e\left(1-\frac{1}{g}\right)^{g-1/2} = 1+\bigO{\frac{1}{g^2}}.
\]

So, combining this with~\eqref{bbb1},~\eqref{bbb2} and~\eqref{bbb3}, one gets

\begin{align*}
\beta(n,g)
    &=\frac{(n-1)(2n-1)(2n-3)}{n+1}\frac{\Omega(n-2,g-1)}{\Omega(n,g)}\\
    &=4\left(1-\frac{4}{n}\right)\exp(-4\theta)\left(1+\frac{8}{n}-\frac{4\theta}{n}\right)\exp\left[-2f(\theta)+(2\theta-1)f'(\theta)+\frac{(2\theta-1)^2}{2n}f''(\theta)\right]\\
    &\quad\times\left(1+\frac{(2\theta-1)J'\left(\theta\right)}{nJ\left(\theta\right)}\right)\bigpar{1+\bigO{\frac{1}{g^2}}}\\
    &=(1-4\lambda(\theta))\left(1+\frac{1}{n}\left(4-4\theta+\frac{(2\theta-1)^2}{2}f''(\theta)+\frac{(2\theta-1)J'\left(\theta\right)}{J\left(\theta\right)}\right)\right)\bigpar{1+\bigO{\frac{1}{g^2}}},
\end{align*}
because $4\exp(-4\theta-2f(\theta)+(2\theta-1)f'(\theta))=1-4\lambda(\theta)$ due to~\eqref{eq:lambdadef} and~\eqref{eq:nicelambdaf}.
Finally, the error term can be treated in a similar fashion as in the previous proof.
\end{proof}

Finally, we have the following exact expression (see the companion Maple file for a proof):

   \[ 4\lambda(\theta)\left(\frac{\theta J'\left(\theta\right)}{J\left(\theta\right)}+\frac{1}{2}-{\theta}+\frac{\theta^2}{2}f''(\theta)\right)+(1-4\lambda(\theta))\left(
 4 -4\theta+\frac{(2\theta-1)^2}{2}f''(\theta)+\frac{(2\theta-1)J'\left(\theta\right)}{J\left(\theta\right)}\right)=0.\]

Using \Cref{lem_alpha_intermediate,lem_beta_intermediate} we conclude that
\[\alpha(n,g)+\beta(n,g)=1+\err\]
uniformly for all $n,g$ such that $\frac{n^{1/3}}{\log n} \le g \le \frac{n-n^{1/3}}{2}$, as $n\to\infty$.

\subsection{Very high genus}

In the section we consider $g\geq \frac{n-n^{1/3}}{2}$. It will be convenient to write the functions in terms of $v=n+1-2g$ and $\gamma=\frac{v-1}{2n}=\frac{1}{2}-\theta$ as then $1<v<n^{1/3}$ in this regime. Then $\gamma$ and $\lambda$ are related by:
\[\gamma=\frac{\lambda}{\sqrt{1-4\lambda}}\log\left(\frac{1+\sqrt{1-4\lambda}}{1-\sqrt{1-4\lambda}}\right).\]
Moreover, we define the function $h(\gamma)$ as the following altered version of $f$:
\[h(\gamma):=f\left(\frac{1}{2}-\gamma\right)+1-2\gamma-\log(2)+2\gamma\log(2\gamma)=-\frac{1}{2}\log(1-4\lambda)+2\gamma\log\left(\log\left(\frac{1+\sqrt{1-4\lambda}}{1-\sqrt{1-4\lambda}}\right)\right),\]
and similarly
\[\tilde{J}(\gamma):=\gamma J\left(\frac{1}{2}-\gamma\right)=\frac{\gamma}{\sqrt{\lambda(\gamma-\lambda-2\lambda\gamma})}.\]

Finally we define for $v\geq 2$
\begin{align*}\tilde{\Omega}(n,v)&:=\frac{g!}{\sqrt{2\pi g}(g/e)^{g}}\frac{n!}{\sqrt{2\pi n}(n/e)^{n}}\Omega(n,g)\\
&=\frac{1}{2\sqrt{2}\pi}n^{2g-2}e^{nf(\frac{g}{n})}{J}\left(\frac{g}{n}\right)\cdot\frac{\sqrt{2\pi} \, (v-1)^{v}}{e^{v-1} \, \Gamma(v+1/2)}\\
&=\frac{n!n^{-3/2}2^{n}}{\sqrt{2}\pi\Gamma(v+\frac{1}{2})}e^{nh(\frac{v-1}{2n})}\tilde{J}\left(\frac{v-1}{2n}\right).\end{align*}

From Stirling's approximation, $\tilde{\Omega}(n,v)$ is asymptotically equivalent to $\Omega(n,g)$ for large $n,g$, and more precisely in this regime where $g\sim n$,p
\[\frac{\tilde{\Omega}(n,v)}{\Omega(n,g)}=1+\frac{1}{12n}+\frac{1}{12g}+\bigO{\frac{1}{n^{2}}}.\]
We can therefore approximate $\alpha$ and $\beta$ using $\tilde{\Omega}$ as follows: 
\begin{align}\label{eq:alphainomegatilde}\alpha(n,g)&=\frac{2(2n-1)}{n+1}\frac{\tilde{\Omega}(n-1,v-1)}{\tilde{\Omega}(n,v)}\left(1+O\left(\frac{1}{n^{2}}\right)\right)\\
\label{eq:betainomegatilde}\beta(n,g)&=\frac{(n-1)(2n-1)(2n-3)}{n+1}\frac{\tilde{\Omega}(n-2,v)}{\tilde{\Omega}(n,v)}\left(1+O\left(\frac{1}{n^{2}}\right)\right).\end{align}
We can then rewrite \eqref{eq_J_as_theta_large}, \eqref{eq_Jprim_as_theta_large}, \eqref{eq_Jsec_as_theta_large}, \eqref{eq_fprim_as_theta_large} and\eqref{eq_fsec_as_theta_large}  in terms of $\tilde{J}(\gamma)$ and $h(\gamma)$ as follows

\begin{align}
\tilde{J}(\gamma)
    &= \frac{-\log(\lambda)}{\sqrt{-\log(\lambda)-1}}+\bigO{\lambda\log(\lambda)},
\label{eq_Jtildeser}\\
\tilde{J}'(\gamma)
    &= \frac{\log(\lambda)+2}{\lambda(-\log(\lambda)-1)^{5/2}}+\bigO{\log(\lambda)},
\label{eq_Jprimtildeser}\\
\tilde{J}''(\gamma)
    &= \bigTheta{\frac{\sqrt{-\log \lambda}}{\lambda^2(\log\lambda)^3}},
\label{eq_Jsectildeser}\\
h'(\gamma)&=2\log(-\log(\lambda))+O(\lambda\log(\lambda)).\label{eq_hprime_series}\\
h''(\gamma)&=\frac{2}{\gamma\log(\gamma)}+O(\gamma^{-1}\log(\gamma)^{-3/2}).\label{eq_hsecond_series}
\end{align}
In particular, this implies that as $\gamma>0$ approaches $0$ and $\gamma+\delta=\Theta(\gamma)$, we have stronger versions of \eqref{eq_fshift} and \eqref{eq_Jshift}
\begin{align}h(\gamma+\delta)&=h(\gamma)+\delta h'(\gamma)+O(\delta^{2}\gamma^{-1}\log(\gamma)^{-1}),\label{eq_hshift}\\
\frac{\tilde{J}(\gamma+\delta)}{\tilde{J}(\gamma)}
    &= 1+O\left(\delta\gamma^{-1}\log(\gamma)^{-1}\right)\label{eq_Jtildeshift1}\\
    &= 1+\delta\frac{\tilde{J}'(\gamma)}{\tilde{J}(\gamma)}+O\left(\delta^{2}\gamma^{-2}\log(\gamma)^{-1}\right).\label{eq_Jtildeshift2} \end{align}

Finally by direct calculation (or combining \eqref{eq:lambdadef} with \eqref{eq:nicelambdaf}) we have
\begin{equation}\gamma h'(\gamma)-h(\gamma)=\frac{1}{2}\log(1-4\lambda(\gamma)).\label{eq_hprimeh_exact}\end{equation}
We will use this to bound the values of $\alpha$ and $\beta$.
\begin{lemma}\label{lem_alpha_high}
In the range $\frac{n-n^{1/3}}{2}\leq g \leq \frac{n-2}{2}$, we have, uniformly in $n$
    \[\alpha(n,g)=\frac{2v-1}{n(-\log(\lambda))}+O(1/n/\log(n)^2).\]
\end{lemma}
\begin{proof}
By the expression \eqref{eq:alphainomegatilde}, we have
\begin{equation}\label{eq_alphainhJtilde}\alpha(n,g)=\frac{2v-1}{n}e^{(n-1)h(\frac{v-2}{2n-2})-nh(\frac{v-1}{2n})}\frac{\tilde{J}(\frac{v-2}{2n-2})}{\tilde{J}(\frac{v-1}{2n})}(1+O(n^{-1})).\end{equation}
We will analyse each part of this expression separately. First, writing $\delta=\frac{v-n-1}{2n(n-1)}$, we have
\[h\left(\frac{v-2}{2n-2}\right)=h(\gamma+\delta)=h(\gamma)+\delta h'(\gamma)+O(\delta^{2}\gamma^{-1}\log(\gamma)^{-1}),\]
applying this to the exponent in \eqref{eq_alphainhJtilde} and noting $(n-1)\delta=-\frac{1}{2}+\gamma=\bigO{1}$ yields
\[(n-1)h\left(\frac{v-2}{2n-2}\right)-nh\left(\frac{v-1}{2n}\right)
=\left(\gamma-\frac{1}{2}\right) h'(\gamma)-h\left(\gamma\right)+O(n^{-1}\gamma^{-1}\log(\gamma)^{-1}),\]
which simplifies to the following using \eqref{eq_hprimeh_exact} and \eqref{eq_hprime_series}
\[\frac{1}{2}\log(1-4\lambda)-\log(-\log(\lambda))+O(n^{-1}\gamma^{-1}\log(\gamma)^{-1}).\]
Moreover, in this regime, $\gamma=\frac{v-1}{n}=O(n^{-2/3})$ so $\lambda=o(n^{-2/3})$. Hence we can remove the $\log(1-4\lambda)$ terms from the above expression, leaving us with
\[(n-1)h\left(\frac{v-2}{2n-2}\right)-nh\left(\frac{v-1}{2n}\right)=-\log(-\log(\lambda))+O(n^{-1}\gamma^{-1}\log(\gamma)^{-1})
.\]
Now from \eqref{eq_Jtildeshift1} we have
\[\frac{\tilde{J}(\frac{v-2}{2n-2})}{\tilde{J}(\frac{v-1}{2n})}=1+\bigO{v^{-1}\log(\gamma)^{-1}}\]
Substituting into \eqref{eq_alphainhJtilde} yields
\[\alpha(n,g)=-\frac{2v-1}{\log(\lambda)n}+\bigO{n^{-1}\log(n)^{-2}}.\]
\end{proof}

\begin{lemma}\label{lem_beta_high}
In the range $\frac{n-n^{1/3}}{2}\leq g \leq \frac{n-1}{2}$, we have, uniformly
    \[\beta(n,g)=1+\frac{2v-1}{\log(\lambda)n}+O(\log(n)^{-3} n^{-1}).\]
\end{lemma}
\begin{proof}
From the expression \eqref{eq:betainomegatilde}, we have
\begin{equation}\beta(n,g)=\frac{(2n-1)(2n-3)n^{1/2}}{4(n+1)(n-2)^{3/2}}e^{(n-2)h(\frac{v-1}{2(n-2)})-nh(\frac{v-1}{2n})}\frac{\tilde{J}\left(\frac{v-1}{2(n-2)}\right)}{\tilde{J}\left(\frac{v-1}{2n}\right)}\left(1+\bigO{n^{-2}}\right).\label{eq_betainhJtilde}\end{equation}
We will analyse each part of this expression. First note that the initial fraction of $n$ can be replaced with $1+\bigO{n^{-2}}$. Next, writing $\delta=\frac{2\gamma}{n-2}$, we have from \eqref{eq_Jtildeshift2} that
\[\frac{\tilde{J}\left(\frac{v-1}{2(n-2)}\right)}{\tilde{J}\left(\frac{v-1}{2n}\right)}=\frac{\tilde{J}\left(\gamma+\delta\right)}{\tilde{J}\left(\gamma\right)}=1+\delta\frac{\tilde{J}'(\gamma)}{\tilde{J}(\gamma)}+\bigO{n^{-2}}=1+\frac{1}{\log(\lambda)n}+\bigO{\log(\lambda)^{-3}n^{-1}}.\]
Moreover, from \eqref{eq_hshift}, we have
\[h\left(\frac{v-1}{2(n-2)}\right)=h(\gamma+\delta)=h(\gamma)+\delta h'(\gamma)+\bigO{\delta^{2}\gamma^{-1}\log(\gamma)^{-1}}.\]
So, using $(n-2)\delta=\gamma$, along with \eqref{eq_hprimeh_exact}, we have
\begin{align*}
(n-2)h\left(\frac{v-1}{2(n-2)}\right)-nh\left(\frac{v-1}{2n}\right)
&=2\gamma h'(\gamma)-2h(\gamma)+\bigO{n^{-1}\gamma^{1}\log(\gamma)^{-1}}\\
&=\log(1-4\lambda)+\bigO{n^{-1}\gamma^{1}\log(\gamma)^{-1}}.
\end{align*}
Substituting these expressions into \eqref{eq_betainhJtilde} yields
\begin{align*}
\beta(n,g)
&=\left(1-4\lambda\right)\left(1+\frac{1}{\log(\lambda)n}\right)+O(\log(n)^{-3} n^{-1})\\
&=1+\frac{2v-1}{\log(\lambda)n}+O(\log(n)^{-3} n^{-1}),    
\end{align*}
as required.

\end{proof}

\section{Additional proofs for \largev}

\subsection{Proof of \eqref{eq_bad_largev}}\label{sec:proof_eq_bad_largev}
Here we focus on the case $n=2g+1$, and we want to show \eqref{eq_bad_largev}, which we recall here:

\[ \Omega(2g+1,g)\sim c\left(\frac{4g}{e}\right)^{2g}\log(g)^{3/2}.\]

Set $\theta=\frac{g}{2g+1}$ and $\l=\l(\theta)$. We now find asymptotics for the quantities appearing in the formula for $\Omega$, using \cref{lem_prop_large_theta}.
First we expand $\l$.

\begin{lemma}
    \[\ll=-\log(g)-\log\log(g)-\log(4)+o(1)\]
    and 
    \[-\l\log(\lambda)=\frac{1}{4g}+\smallo{\frac{1}{g\log}}\]
\end{lemma}
\begin{proof}
    We start with the equation
    \[\frac 1 2 -\frac{g}{2g+1}=-\lambda\log(\lambda)+\bigO{\lambda^2\log(\lambda)}\]
    which yields
    \begin{equation}\label{cvb}
        -\l\ll=\frac{1}{4g}+\bigO{\lambda^2\log(\lambda)}+\bigO{\frac{1}{g^2}}
    \end{equation}
    Taking the log in this we get
    \begin{align*}
        \ll&=-\log(g)-\lll-\log(4)+o(1)\\
        &=-\log(g)-\log\Big[\log(g)+\lll+\log(4)+o(1)\Big]-\log(4)+o(1)\\
        &=-\log(g)-\log\log(g)-\log(4)+o(1)
    \end{align*}
    This is the first equation of the lemma, which we can plug back into~\cref{cvb} to get the second equation in the lemma.
\end{proof}

Now we can expand $f$ and $J$.
\begin{lemma}
    \[f(\theta)=\log(2)-1+\frac{1-\log2}{2g}+\frac{\log(g)+\log\log(g)+\log 4}{2g}+\smallo{\frac{1}{g}}\]
    and
    \[J(\theta)\sim 4g\sqrt{\log g}\]
\end{lemma}
\begin{proof}
    We know that
    \[f(\theta)=\log\! \left(2\right)-1+\left(2 \log\! \left(\lambda \right)^{2}+\left(2 \log\! \left(2\right)-2\right) \log\! \left(\lambda \right)\right) \lambda +\mathrm{O}\! \left(\lambda\right)\]
    and we can use the previous lemma to obtain the expansion.

    Similarly,
    \[J(\theta)\sim\sqrt{-\frac{1}{\log\! \left(\lambda \right)}}\, \lambda^{-1}\]
    and using the results of the previous lemma directly yield the desired result
\end{proof}

Now, using the previous lemma, we can prove~\eqref{eq_bad_largev}:
\begin{align*}
    \Omega(2g+1,g)&=\frac{1}{2\sqrt{\pi}} \frac{\sqrt{g}(g/e)^g}{g!}(2g+1)^{2g-2}e^{(2g+1)f(\theta)}{J}\left(\theta\right)K(1)\\
    &\sim cst\cdot (2g)^2g\frac{e}{4g^2}\left(\frac{2}{e}\right)^{2g+1}\frac{e}{2}\cdot g\log(g)\cdot g\sqrt{\log g}\\
    &=c\left(\frac{4g}{e}\right)^{2g}\log(g)^{3/2}
\end{align*}

\subsection{Proof of \Cref{prop_s_largev}}\label{sec:s_largev}

Recall that $\Omega(n,0)=\frac{n^{-3/2}4^n}{\sqrt{\pi}}$. We will use the notation $v=n-2g+1$.

Now, let us estimate $\Omega(n,1)$, using~\eqref{eq_f_as_theta_to_0} and~\eqref{eq_J_as_theta_to_0}

\begin{align*}
    \Omega(n,1)=\frac{1}{2\sqrt{\pi}} (1/e)e^{nf(\frac{1}{n})}{J}\left(\frac{1}{n}\right)K(n-2)\sim \frac{1}{\sqrt{\pi}} \frac{4^nn^{3/2}}{12}
\end{align*}

Hence
\begin{align*}
    s(n,1)=\frac{\Omega(n,0)}{\Omega(n,1)}\sim \frac{12}{n^3}\to 0
\end{align*}
as $n\to \infty$

Now, take $1<g<n/2$. Using~\Cref{lem_taylor_f_J} with $\delta=1/n$, we have

\begin{align*}
    s(n,g)=\frac{\Omega(n,g-1)}{\Omega(n,g)}=\frac{\exp(-f'(\theta))}{n^2}\bigpar{1+\bigO{\frac{\theta^{-1}\halfminth^{-1}}{n}}}\bigO{1}=\bigO{\frac{\exp(-f'(\theta))}{n^2}}.
\end{align*}
We can already say, by~\eqref{eq_fprim_as_theta_to_0}, that if $n\to \infty$ and $g/n\not\to 1/2$, then $s(n,g)\to 0$.

Now we consider the case  $1/2-g/n=o(1)$. Using~\eqref{eq_fprim_as_theta_large} we have
\begin{align*}
    s(n,g)=\bigO{\frac{1}{\lambda^2n^2}}.
\end{align*}
Recall that $v/n\sim -\lambda\log(\lambda)$ by~\eqref{eq_large_theta_lambda} hence for $v\geq 1$, $\log(\lambda)=\bigO{\log n}$. Therefore
\begin{align}\label{ghj}
    s(n,g)=\bigO{\bigpar{\frac{\log(n)}{v}}^2}.
\end{align}

Now we are ready to prove~\cref{prop_s_largev}. First, by definition of $s(n,g)$ and $\Omega(n,g)$, it is easily checked that for all $n\geq 0$ and all $\goodGG\leq g\leq \badGG$, $s(n,g)>0$ (i.e., in that range, the numerator doesn't vanish and the denominator doesn't blow up). By~\eqref{ghj}, we also know that $s(n,\badGG)\to \infty$ as $\nto$. Therefore~\cref{assum_behavior_s} is verified. And the results above show that if $n-2g>>\log n$, then $s(n,g)\to 0$ as $\nto$, thus completing the proof.

\section{Proof of \eqref{eq_ratio_badV}}\label{sec:badV}
Recall that we have $\goodG=n/2$, $\badG=\left\lceil\frac{n-\badV}{2}\right\rceil$, and 
\begin{align}
    \Omega(n,g) &:=\frac{1}{\sqrt{\pi}} n^{-3/2} 2^n \frac{n!}{(n-2g)!}\log(n)^{n-2g}
    \notag
\end{align}

Let $g=\badG$ and $\delta=\log^{1/3}(n)$ so that $n-2g=\log(n)\delta$, we will use the fact that $1<<\delta<<\sqrt{\log n}$.

We have

\begin{align}
\Omega(n,g)&\sim \frac{1}{\sqrt{\pi}}n^{-3/2}2^{n}\frac{n!}{(n-2g)!}\log(n)^{n-2g}\\
&\sim \frac{1}{\sqrt{\pi}}n^{-3/2}2^{n}\frac{n^ne^{n-2g}}{(\log (n) \delta)^{n-2g}e^{n}}\sqrt{\frac{n}{n-2g}}\log(n)^{n-2g}\\
&\sim \frac{1}{\sqrt{\pi}}n^{-3/2}2^{n}\frac{n^ne^{\log(n)\delta}}{\delta^{\log(n)\delta}e^{n}}\sqrt{\frac{n}{\log(n) \delta}}\\
&\sim \frac{n^{n-1+\delta-\delta\log{\delta}}2^{n}}{\sqrt{\pi}\sqrt{\log(n)}e^{n}\sqrt{\delta}}\label{xxx}
\end{align}

On the other hand, let 
\begin{equation}
x:=\frac{n-2g}{2n}=\frac{\log(n)\delta}{2n}
\end{equation}
Also,
\begin{equation}
\log(x)=\log\log(n)+\log(\delta)-\log(2)-\log(n)
\end{equation}

We will write $\l=\l(g/n)$ and 
\[Z:=-\log(\lambda)\]
By~\eqref{eq_large_theta_lambda}, we get
\begin{equation}
x=Z\lambda+o(x^{2-\epsilon})
\end{equation}
Hence, applying log to this
\begin{equation}\label{eq:Z_intermediate}
Z=\log(Z)-\log(x)+o(x^{1-\epsilon})
\end{equation}
By definition of $x$ this already shows
\begin{equation}
Z\sim \log(n).
\end{equation}
So 
\begin{align}
\log(Z)&=\log(\log(Z)-\log(x)+o(x^{1-\epsilon}))\\
&=\log\log(n)+\frac{\log(Z)-\log(xn)}{\log(n)}+O\left(\left(\frac{\log\log(n)}{\log(n)}\right)^2\right)\\
&=\log\log(n)+\frac{\log\log(n)-\log(xn)}{\log(n)}+O\left(\frac{\log\log(n)}{\log(n)^2}\right)
\end{align}
Plugging this back into~\eqref{eq:Z_intermediate} one gets
\begin{align}
Z&=\log(n)+\log\log(n)-\log(xn)\left(1+\frac{1}{\log(n)}\right)+O\left(\frac{\log\log(n)}{\log(n)^2}\right)\\
&=\log(n)+\left(\log(2)-\log(\delta)\right)\left(1+\frac{1}{\log(n)}\right)+O\left(\frac{\log\log(n)}{\log(n)^2}\right)\label{eq:Z_final}
\end{align}
Now,~\eqref{eq_f_as_theta_large} yields
\begin{equation}
f(\frac{g}{n})=(\log(2)-1)(1-2x)+2x\left(Z+\frac 1 Z\right)+o(1/n)
\end{equation}

Putting everything together
\begin{align}
nf(\frac{g}{n})&=(\log(2)-1)(n-\log(n)\delta)+\log(n)\delta[\log(n)+\log(2)-\log(\delta)]+\delta[\log(2)-\log(\delta)]+\delta+o(1)\\
&=(\log(2)-1)n+\log(n)\delta[\log(n)+1-\log(\delta)]+\delta[\log(2)-\log(\delta)]+\delta+o(1)
\end{align}
with gives
\begin{equation}\label{yyy}
\exp(nf(\frac{g}{n}))\sim\left(\frac 2 e\right)^nn^{\log(n)\delta}n^\delta n^{-\delta\log(\delta)}\left(\frac 2 \delta\right)^\delta e^{\delta}
\end{equation}

Also, by~\eqref{eq_J_as_theta_large}
\begin{equation}\label{uuu}
J(g/n)\sim\frac{\sqrt Z}{x}\sim \frac{2n}{\sqrt{\log(n)}\delta}
\end{equation}

And
\begin{equation}\label{eee}
n^{2g}=n^nn^{-\log(n)\delta}
\end{equation}

So starting with \Cref{thm_main} and applying \Cref{yyy,uuu,eee} and in the end \Cref{xxx} we have
\begin{align}
E(n,g)&\sim \frac{1}{2\sqrt 2\pi}n^{2g-2}\exp(nf(g/n))J(g/n)  \\
&\sim\frac{1}{2\sqrt 2\pi}n^{n-2}n^{-\log(n)\delta}\left(\frac 2 e\right)^nn^{\log(n)\delta}n^\delta n^{-\delta\log(\delta)}\left(\frac 2 \delta\right)^\delta e^{\delta}\frac{2n}{\sqrt{\log(n)}\delta}\\
&\sim\frac{1}{\sqrt 2\pi}n^{n-1+\delta-\delta\log(\delta)}\frac{e^{-n+\delta}2^{n+\delta}}{\sqrt{\log(n)}\delta^{\delta+1}}\\
&\sim \Omega(n,g)\bigpar{\frac{2e}{\delta}}^\delta\frac{1}{\sqrt{2\pi\delta}}=o\bigpar{\Omega(n,g)}
\end{align}
and we are done.

\section{Proof of~\Cref{thm_walk_general}\label{sec:RW_general_proof}}
We refer to~\cref{sec:RW_general} for definitions and assumptions.
To simplify the notation, let us introduce
\[A(n,g):=\Sij\alpha_{i,j}(n,g)\]
and the ``error terms''
\begin{align*}
    r^{+}(k)&:=\prod_{j=k+1}^{\infty}\max\left(1,\left\{ A(n,g)  \mid (n,g)\in\mathcal I (k) \right\}\right),\\
    r^{-}(k)&:=\prod_{j=k+1}^{\infty}\min(1,\left\{ A(n,g) \mid (n,g)\in\mathcal I (k) \right\}),
\end{align*}

which due to \Cref{assum_alpha_beta_general} satisfy $r^{-}(k),r^{+}(k)\in(0,\infty)$ and $r^{-}(k),r^{+}(k)\to 1$ as $k\to\infty$.

We consider the random walk $(N_k,G_k)_{k\geq 0}$ started at $(N_0,G_0)=(n,g)$ and stopped as soon as it leaves $\mathcal I$, denoting $\tau=\tau(n,g)$ the stopping time, such that for each step $0\leq k<\tau$, we have, for every $i,j\in\mathcal S$
\[(N_{k+1},G_{k+1})=(N_{k}-i,G_{k}-j)\quad  \text{with probability} \quad \frac{\alpha_{i,j}(N_k,G_k)}{A(N_k,G_k)}.\]

Let $M_k=N_k+G_k$, this is a strictly decreasing quantity of the walk. Let also $Q_k:=Q(N_k,G_k)$, $S_k:=s(N_k,G_k)$, $A_k=A(N_k,G_k)$ and $R^\pm_k=r^\pm(M_k)$.

Once again, we show that $Q_\tau$ and $Q_0$ are not so far from each other.

\begin{proposition}[Conserved quantity]
\label{prop_E(Q)_general}
We have 
\[\mathbb{E}(R^{-}_\tau Q_\tau) \leq R_0^{-}Q_0 \]
and
\[ R_0^{+}Q_0 \leq \mathbb{E}(R^{+}_{\tau}Q_\tau ),\]
\end{proposition}

\begin{proof}
Let us show the first inequality.

For each $k$, we have
\begin{equation}\label{eq_RQ_general}
    \EE(R^-_{k+1}Q_{k+1}\mid N_k,G_k)=\frac{\sum_{i,j\in \mathcal S}\alpha_{i,j}(N_k,G_k)Q(N_k-i,G_k-j)\EE(r^-(M_{k+1})\mid N_k,G_k)}{A_k}.
\end{equation}

By definition of the steps $\mathcal S$, the value $M_k = N_{k}+G_{k}$ is strictly decreasing, and since $r^-$ is increasing, we get
\[\EE(r^-(M_{k+1})\mid N_k,G_k)\leq r^-(M_k-1)\leq R^-_kA_k.\] 
Therefore, recalling~\eqref{eq_Q_general}:
\[\EE(R^-_{k+1}Q_{k+1}\mid N_k,G_k)\leq\sum_{i,j\in \mathcal S}\alpha_{i,j}(N_k,G_k)Q(N_k-i,G_k-j)R_k^-=R_k^-Q_k.\]

Hence $(R_k^-Q_k)_{k \geq 0}$ is a supermartingale and the first inequality follows. Similarly, $(R_k^+Q_k)_{k \geq 0}$ is a submartingale, from which the second inequality follows.

\end{proof}

We also need a similar (but coarser) property on $S_k$:

\begin{lemma}[Conserved quantity, bis]\label{lem_E(S)_general}
Let $S_k:=s(N_k,G_k)$, then
\[\mathbb{E}(S_{\tau-1})=\bigO{S_0}.\]
\end{lemma}

\begin{proof} Within this proof, and for the rest of the section we will write $\pm$ to mean $-$ if \eqref{eq_C-} holds and $+$ if \eqref{eq_C+} holds. So the definition of $s$ can be written as
\[s(n,g)=\frac{\Omega(n,g\pm1)}{\Omega(n,g)}.\]
Combining this with the definition \eqref{eq_def_alpha_general} of the $\alpha_{i,j}$, we have the equality
\begin{align*}
    s(n,g)&=\frac{\Sij \alpha_{i,j}(n,g)s(n-i,g-j)}{A(n,g\pm 1)}\\
    &=\bigpar{1+O(\eta(n+g))}\bigpar{\Sij \alpha_{i,j}(n,g)s(n-i,g-j)}
\end{align*}
where the second equality follows from~\cref{assum_alpha_beta_general}.
Since $\eta$ is summable and $N_k+G_k$ is strictly decreasing, we immediately get that
\[\mathbb{E}(S_k)=O(S_0)\]
for all $k$ which finishes the proof.

\end{proof}
Thanks to these to conserved quantities, we can analyse the behaviour of our random walk.

\begin{proposition}[Behaviour of the random walk]\label{prop_behavior_RW_general}
    The sequence $g\equiv g_n$ of \Cref{thm_RW_to_asympto} is such that for all fixed $L>0$
    \[\P\left((N_\tau,G_\tau)\in\goodB\text{ and } M_\tau>L\right)\to 1\]
    as $\nto$.
\end{proposition}

\begin{proof}
    On the one hand, by \Cref{lem_E(S)_general} and~\cref{assum_staring_point_general}, one has, as $\nto$,
\[\P\left((N_\tau,G_\tau)\in\badB\right)\mathbb{E}\left(S_{\tau-1} \mid (N_\tau,G_\tau)\in\badB\right) \leq \mathbb{E}(S_{\tau-1})=\bigO{S_0}=o(1).\]

But if $(N_\tau,G_\tau)\in\badB$, then necessarily $(N_{\tau-1},G_{\tau-1})\in\badBB$, hence by \Cref{assum_behavior_s_general}  it holds that $S_{\tau-1} >c>0$. Therefore 
\[\mathbb{E}\left(S_{\tau-1} \mid (N_\tau,G_\tau)\in\badB\right)>c>0,\]
and so
\begin{equation}\label{eq_XXX}
    \P\left((N_\tau,G_\tau)\in\badB\right)=o(1).
\end{equation}

On the other hand, fix now a constant $L$. By the same argument one has
\[\P\left(M_\tau\leq L\text{ and }(N_\tau,G_\tau)\in\goodB\right)\mathbb{E}\left(S_{\tau-1} \mid M_\tau\leq L\text{ and }(N_\tau,G_\tau)\in\goodB\right)=o(1).\]
But due to \Cref{assum_behavior_s_general} it holds that \[\mathbb{E}\left(S_{\tau-1} \mid M_\tau\leq L\text{ and }(N_\tau,G_\tau)\in\goodB\right)\geq \min_{1\leq m'\leq L+\max\{i+j\mid (i,j)\in \mathcal S\}\atop (n',g')\in\goodBB(m')}s(n',g')>0.\] 
Therefore one has
\begin{equation}\label{eq_YYY}
   \P\left(M_\tau\leq L\text{ and }(N_\tau,G_\tau)\in\goodB\right)=o(1).
\end{equation}
Finally, \eqref{eq_XXX} and \eqref{eq_YYY} imply the result.
\end{proof}

We are finally ready to prove our general theorem.
\begin{proof}[Proof of \Cref{thm_walk_general}]

    By~\Cref{assum_bd_values_general}, we can bound $Q(n,g)$ for all $(n,g)\in\goodB\cup\badB$, and therefore $Q_\tau\leq C$ deterministically. Hence, by \Cref{prop_behavior_RW_general}, it holds that
    \[\mathbb{E}(R^{\pm}_{\tau}Q_\tau)=\mathbb{E}\left(R^{\pm}_{\tau}Q_\tau \mid  M_\tau\geq L\text{ and }(N_\tau,G_\tau)\in\goodB\right)+o(1).\]
   
    Therefore, by \Cref{prop_E(Q)_general}, we have
    \[Q(n,g)=Q_0\leq\frac{1}{r^+(n+g)}\mathbb{E}(R^{+}_\tau Q_\tau)\leq \frac{1}{r^+(n+g)}\max_{M>L\atop (N,G)\in \goodB(M)}(r^{+}(M)Q(N,G))+o(1).\]
    Note that this holds for any $L$. Using~\Cref{assum_bd_values_general} and remembering that $r^+(k)\to 1$ as $k\to\infty$ we get
    \[Q(n,g)\leq 1+o(1).\]
The lower bound follows in the same way.
\end{proof}

\end{document}